\theoremstyle{plain}
\newtheorem{thm}{Theorem}[section]
\newtheorem{lem}[thm]{Lemma}
\newtheorem{prop}[thm]{Proposition}
\theoremstyle{definition}
\newtheorem{exam}{Example}
\newcommand{\R}{\mathbb R}
\newcommand{\Z}{\mathbb Z}
\begin{document}

\title [\ ] {on cobordism of generalized (real) bott manifolds}

\author{Yuxiu Lu}
\address{Department of Mathematics and IMS, Nanjing University, Nanjing, 210093, P.R.China
 }
 \email{luyxnj@gmail.com}
\date{September 17, 2017}

\keywords{real Bott manifold, generalized real Bott manifold, small cover, quasitoric manifold, generalized Bott manifold, 
cobordism, product of simplices}

\begin{abstract}
  We show that all generalized (real) Bott manifolds which are (small covers) quasitoric manifolds over a product of simplices $\Delta^{n_1}\times\cdots\times\Delta^{n_r}\times\Delta^{1}$ are always boundaries of some manifolds. But these manifolds with the natural $(\mathbb{Z}_2)^n$ action do not necessarily bound equvariantly. In addition, we can construct some examples of null-cobordant but not orientedly null-cobordant manifolds among quasitoric manifolds.
 \end{abstract}

\maketitle

 \section{Introduction}

In this paper, we will mainly investigate cobordism of certain type of manifolds called real Bott manifold and generalized real Bott manifolds (see Kamishima and Masuda~\cite{Kamishima09}).
A real Bott manifold is defined to be a sequence of $\R P^1$-bundles
   \begin{align*}
         M_n\overset{\R P^1}\longrightarrow M_{n-1}\overset{\R P^1}{\longrightarrow}\cdots  \overset{\R P^1} \longrightarrow M_1\overset{\R P^1}\longrightarrow M_0=\{a\ point\}
       \end{align*} 
such that  $M_i\longrightarrow M_{i-1}$ for $i=1,\cdots,n$ is the projective bundle of a Whitney sum of two real line bundles over $M_i$.
It is well known that a real Bott manifold is a flat manifold, which means that it admits a Riemannian metric with everywhere zero sectional curvature. Then this manifold is a boundary, since any closed manifold with a flat  Riemannian metric is a boundary (see G.C.~Hamrick and D.C.~Royster, ~\cite{Hamrick82}). %Actually, real Bott manifolds can be nontrivial, that is to say, they can not be product manifolds with the form $N\times S^1$. For example, in dimension $2$, we have Klein bottle; for dimension more than $2$, one can see Kamishima and Masuda~\cite{Kamishima09}, they give a classification of real Bott manifold of low dimension.

Any real Bott manifold bounds equivariantly  (see Y.~Cheng and Y.~Wang~\cite {Wang11}, Z.~L\"u and  Q.~Tan~\cite {LuZhi14}). But their methods are not easy to use to discuss a generalized real Bott manifold. So we need some new technique to deal with the general case.

     This study of $n$- dimensional real Bott manifolds is equivalent to the study of small covers over an $n$-cube (see Kamishima~\cite {Kamishima09}, Suh~\cite {Suh09}, Masuda~\cite {Masuda91}). This kind of manifolds and quasitoric manifolds are defined in Davis-Januszkiewicz~\cite {Davis91}. By a small cover one means an $n$-dimensional manifold $M^n$ with $\Z_2^n$-action locally isomorphic to the standard action of $\Z_2^n$ on $\R^n$ for which the orbit space $M^n/\Z_2^n$ is  a simple convex polytope $P^n$. A quasitoric manifold means a smooth orientable $2n$-dimensional manifold $M^{2n}$ with $T^n$-action locally isomorphic to the standard action of $T^n$ on $\mathbb{C}^n$ for which the orbit space $M^{2n}/T^n$ is a simple convex polytope $P^n$. The facial submanifold of a quasitoric manifold or a small cover is defined to be $\pi^{-1}(F)$, where $\pi:M^{dn}\to P^n$ is a quotient map, $d=1,2$. The concept of characteristic map is very important in the construction of the small cover and quasitoric manifold. Take a small cover as an example. %For every facets of a simple polytope $P^n$,
A characteristic map means a function $\lambda: \{the\ facets \ of \ P^n\} \to \Z_2^n$, which satisfies that for every vertex $v=F_1\cap \cdots \cap F_n$ of $P^n$, $\lambda(F_1),\cdots,\lambda(F_n)$ form a basis of $\Z_2^n$. The characteristic map can also be identified with a map mapping each facet $F$ to an element in $\Z_2^n$, where $\lambda(F)$ is the generator of the isotropy group of the facial submanifold $\pi^{-1}(F)$ for each facet $F$. The characteristic map determines a small cover over $P^n$ and that any function satisfying the above condition can be realized as the characteristic function of some small cover.

In their paper, Davis and Januszkiewicz also provide following formulas to compute the total Pontryagin class and the total Stiefel-Whitney class for small covers and quasitoric manifolds. 
 \begin{thm}
  (Davis and Januszkiewicz \cite [corollary 6.8]{Davis91}) Let $M^n$ be a small cover over a simple polytope $P^n$. Let $v_1,\cdots, v_m$ denote elements in the cohomology ring of $M$ dual to the facial submanifolds of all the facets of $M$. Let $j: M^n\to {B_d}P^n$ be inclusion of the fiber.\\ 
(i) If $d=1$, then
     \begin{align*}
      w(M^{dn})={j^*}\prod _{i=1}^{m}(1+v_i),   \  and \   \      
      p(M^{dn})=1.
       \end{align*}  
%If $M^{2n}$ is a quasitoric manifold over a simple polytope $P^n$ and  $j: M^{2n}\to ZP^n$ are inclusions of the fiber, where $ZP^n$ is the Borel construction of  $P^n$, 
(ii)If $d=2$, then 
     \begin{align*}
      w(M^{dn})={j^*}\prod _{i=1}^{m}(1+v_i) \ mod\ 2,  \ and\   \      
      p(M^{dn})={j^*}\prod _{i=1}^{m}(1-v_i^2) .
       \end{align*}             
  \end{thm}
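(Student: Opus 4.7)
The plan is to deduce both formulas from a single geometric input: a stable isomorphism expressing the tangent bundle of $M^{dn}$ as a sum of canonical line bundles indexed by the facets of $P^n$. To set this up, for each facet $F_i$ the facial submanifold $M_i = \pi^{-1}(F_i)$ has codimension $d$ in $M^{dn}$, and its normal bundle is a real (when $d=1$) or complex (when $d=2$) line bundle. This normal bundle extends canonically to a line bundle $L_i$ on all of $M^{dn}$ having $M_i$ as the zero locus of a transverse section, so the class $v_i$ dual to $M_i$ is identified with $w_1(L_i)$ or $c_1(L_i)$ according to the case.

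The key geometric step is the stable splitting
\begin{equation*}
TM^{dn} \oplus \underline{\R}^{\,d(m-n)} \;\cong\; \bigoplus_{i=1}^{m} L_i,
\end{equation*}
where on the right each $L_i$ is regarded as a real bundle of rank $d$. I would derive this through the moment-angle model: $M^{dn}$ arises as the quotient of a moment-angle manifold $Z_{P^n}$ (a transverse intersection of quadrics sitting inside $\R^m$ or $\mathbb{C}^m$) by a free action of a rank-$(m-n)$ subgroup of $T^m$ or $\Z_2^m$. The ambient trivial bundle on $\R^m$ or $\mathbb{C}^m$ restricts to $Z_{P^n}$ as the Whitney sum of $TZ_{P^n}$ with its normal bundle, and the latter tautologically splits as a sum of the line bundles that descend to the $L_i$; pushing the decomposition through the free quotient yields the displayed isomorphism. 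The map $j : M^{dn} \hookrightarrow B_d P^n$ in the statement is the inclusion of $M^{dn}$ as a fibre of the Borel construction, and the $v_i$ are pulled back from the corresponding universal classes upstairs.

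Once the splitting is in hand, both conclusions follow from multiplicativity of total characteristic classes and the cancellation of trivial factors. When $d=1$, each $L_i$ is a real line bundle, so $w(L_i) = 1 + v_i$ and $p(L_i) = 1$, yielding $w(M^n) = j^{*}\prod(1+v_i)$ and $p(M^n) = 1$. When $d=2$, each $L_i$ is a complex line bundle, hence real of rank $2$, giving $w(L_i) = 1 + c_1(L_i) \bmod 2 = 1 + v_i \bmod 2$ and $p(L_i) = 1 - c_1(L_i)^2 = 1 - v_i^2$, which produces both formulas in part (ii). The main obstacle is establishing the stable splitting itself; everything afterwards is formal bookkeeping with Whitney's formula. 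I expect that step to reduce to a careful check that the free quotient on $Z_{P^n}$ respects the tautological $L_i$-decomposition, which is essentially the content of Davis--Januszkiewicz's tangent bundle theorem preceding their Corollary~6.8.
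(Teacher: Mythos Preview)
The paper does not supply a proof of this theorem: it is quoted verbatim as Corollary~6.8 of Davis--Januszkiewicz~\cite{Davis91} and used as a black box throughout. So there is no argument in the paper to compare your proposal against.

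That said, your sketch is sound and is essentially the standard route. The stable splitting
\[
TM^{dn}\oplus\underline{\R}^{\,d(m-n)}\;\cong\;\bigoplus_{i=1}^m L_i
\]
is exactly Davis--Januszkiewicz's Theorem~6.6, from which their Corollary~6.8 follows by the Whitney product formula just as you indicate. Your derivation via the moment-angle manifold $Z_{P^n}$ is the Buchstaber--Panov repackaging of the same idea; either version works. One small caution: for a complex line bundle $L$ viewed as a real plane bundle, the standard convention gives $p_1(L)=c_1(L)^2$, hence $p(L)=1+c_1(L)^2$, not $1-c_1(L)^2$. The minus sign in the stated formula reflects Davis--Januszkiewicz's particular sign convention (and the fact that the Whitney formula for Pontryagin classes holds only modulo $2$-torsion, which is absent here since $H^*(M^{2n};\Z)$ is free). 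You should be explicit about which convention you adopt so that the sign comes out matching the statement.
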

Davis and Januszkiewicz also give a method to compute the cohomology ring of small covers and quasitoric manifolds. 
 \begin{thm}
(Davis and Januszkiewicz \cite [Theorem 4.14]{Davis91}) \ If $M^n$ is a small cover, then $H^*(M;\Z_2)=\Z_2[v_1,\cdots,v_m]/(I+J)$; If $M^{2n}$ is a quasitoric manifold, then $H^*(M;\Z)=\Z[v_1,\cdots,v_m]/(I+J)$; $I$ is the Stanley-Reinser ideal of $P^n$, $J$ is generated by linear combinations of $v_1,\cdots,v_m$ which are determined by characteristic map $\lambda$.
  \end{thm}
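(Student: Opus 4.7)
The plan is to set up a Borel construction for $M$ and reduce the computation of its cohomology to a computation of equivariant cohomology, using the fact that $M$ is equivariantly formal over the acting group.

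First I would work with the Borel fibration
\[
M \longrightarrow EG \times_G M \longrightarrow BG,
\]
where $G = T^n$ in the quasitoric case (with integer coefficients) and $G = (\Z_2)^n$ in the small cover case (with $\Z_2$ coefficients). The orbit map $\pi\colon M\to P^n$ upgrades to a map from the Borel construction onto the space obtained by gluing copies of $BG$ along the face lattice of $P^n$, which is essentially the space $B_dP^n$ of Theorem 1.1. A Mayer--Vietoris computation over the nerve of this face stratification identifies its cohomology with the Stanley--Reisner ring $R[v_1,\ldots,v_m]/I$, where $R=\Z$ or $\Z_2$ and $v_i$ is the class dual to the codimension-$1$ stratum over the facet $F_i$.

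Next I would verify that the Serre spectral sequence of the Borel fibration degenerates at $E_2$, i.e., that $M$ is equivariantly formal. A generic linear functional on $P^n$ orients the $1$-skeleton; the preimages under $\pi$ of the resulting descending-face decomposition of $P^n$ endow $M$ with a CW structure having one cell of dimension $d$ times the index of $v$ for each vertex $v$ of $P^n$, where $d=1$ for small covers and $d=2$ for quasitoric manifolds. In particular all cells are even-dimensional in the quasitoric case, so $H^*(M;\Z)$ is free and concentrated in even degrees, forcing $E_2=E_\infty$ for dimensional reasons. Over $\Z_2$ a parallel rank count yields the same collapse for small covers. Granting this collapse, $H^*_G(M;R)$ is isomorphic as an $H^*(BG;R)$-algebra to $R[v_1,\ldots,v_m]/I$, and the ordinary cohomology is recovered as
\[
H^*(M;R) \;\cong\; H^*_G(M;R)\big/\bigl(\mathrm{im}\,H^{>0}(BG;R)\bigr).
\]
To identify this quotient with $R[v_1,\ldots,v_m]/(I+J)$ one checks that under the above isomorphism the pullback of each canonical generator $x_i\in H^*(BG;R)$ becomes the linear combination $\sum_j\lambda_{ij} v_j$ prescribed by the characteristic map $\lambda$; this is because $\lambda(F_j)$ records the $1$-parameter subgroup of $G$ that is the isotropy of the facial submanifold $\pi^{-1}(F_j)$, so dualising yields precisely these linear forms, which by definition generate $J$.

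The main obstacle I expect is establishing equivariant formality, that is, producing the promised even-dimensional (resp.\ $\Z_2$-)cell decomposition of $M$. This requires choosing a generic linear functional on $P^n$ and proving that the preimages under $\pi$ of the open descending faces are genuine affine cells of the predicted dimension; the combinatorial index count at each vertex then has to match the Betti numbers so that the collapse argument goes through. Once this geometric input is in place, the algebraic identification of the two ideals $I$ and $J$ is essentially formal, and the presentation for both $H^*(M;\Z_2)$ and $H^*(M;\Z)$ falls out uniformly from the same Borel-construction framework.
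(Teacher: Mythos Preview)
The paper does not prove this statement at all: Theorem~1.2 is quoted verbatim from Davis--Januszkiewicz \cite[Theorem~4.14]{Davis91} and is used as a black box throughout, so there is no in-paper argument to compare your proposal against. Your outline (Borel construction, identification of $H^*_G(M)$ with the face ring via a Mayer--Vietoris/nerve argument, equivariant formality from the perfect cell decomposition coming from a generic linear functional on $P^n$, and recovery of the ideal $J$ from the characteristic map) is in fact an accurate summary of the original Davis--Januszkiewicz proof, and the obstacle you flag---constructing the cell decomposition and matching Betti numbers---is exactly the content of their Theorem~3.1.
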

By these two theorems, we can prove that any real Bott manifold is cobordant to zero, and more generally, any generalized real Bott manifold over $P^n\times \Delta^1$ is cobordant to zero, where $P^n$ is an arbitrary product of simplices (see Theorem 3.4). This gives some examples of small covers which are boundaries while not bounding equivariantly. Furthermore, one sufficient condition for a generalized real Bott manifold over $P^n\times \Delta^k$ to be null-cobordant, is given for odd numbers $k=2^l-1$ (see Theorem 3.6). Finally, for any product of simplices $P^n$, the generalized Bott manifold over $P^n\times \Delta^1$ is proved to be orientedly cobordant to zero just as the case of generalized real Bott manifold (see Theorem 4.3), and we can confirm that a special kind of quasitoric manifolds over an even dimensional cube are all unorientedly cobordant to zero, but are not orientedly cobordant to zero (see Theorem 4.5).

  \section{Cobordism of real Bott manifold}
     Suppose that $M^n$ is a small cover over an $n$-cube.
     Let the first $\Z_2$-coefficient cohomology classes $v_1,\cdots, v_n$ dual to 
     the facial submanifolds of facets $F_1,\cdots,F_n$, meeting at a vertex.
Then we have facets $F_1^*,\cdots,F_n^*$ which are parallel to $F_1,\cdots,F_n$ respectively. Let the first $\Z_2$-coefficient cohomology classes $u_1,\cdots,u_n$ be dual to the facial submanifolds of facets $F_1^*,\cdots,F_n^*$. Since 
$\{\lambda(F_1),\cdots,\lambda(F_n)\}$ is a basis of $\Z_2^n$, we can assume that 
\  $\lambda(F_1)=e_1,\cdots, \lambda(F_n)=e_n$. So $\lambda(F_1^*),\cdots,\\ \lambda(F_n^*)$ can be written in the form of linear combinations of $e_1,\cdots,e_n$, which will give a $\Z_2$-coefficient matrix of order $n$. This matrix is defined to be the \emph{reduced matrix} of the small cover $M^n$, denoted by $A=(a_{ij})$, where 
 \begin{align}
\lambda(F_i^*)=\sum{a_{ij}}e_j. 
 \end{align}
The theorem of Davis and Januszkiewicz gives the total Stiefel-Whitney class $w= (1+u_1)(1+v_1)\cdots (1+u_n)(1+v_n)$. By the restrictions given by the Stanley-Reinser ideal $I$, 
$w= (1+u_1+v_1)\cdots (1+u_n+v_n)$. \\
Define $y_i=u_i+v_i$, then 
$w=(1+y_1)\cdots (1+y_n)$.\\ 
The restrictions given by the ideal $J$ determines another relation:
 \begin{align}
(u_1,\cdots,u_n)(A+E_n)= (y_1,\cdots,y_n),  
 \end{align}
where $E_n$ is the identity matrix of order $n$.
\begin{lem}
Let $A$ be an $n \times n$ matrix with entries in ${\Z_2}$. Suppose that every principal minor of A is $1$. If $det A=1$, then A is conjugated by a permutation matrix to a unipotent upper triangular matrix.
  \end{lem}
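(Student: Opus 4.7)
The natural reduction is graph-theoretic. Conjugating $A$ by the permutation matrix $P_\sigma$ produces the matrix with $(i,j)$-entry $a_{\sigma^{-1}(i)\sigma^{-1}(j)}$, so $P_\sigma A P_\sigma^{-1}$ is upper triangular precisely when $a_{pq}=0$ whenever $\sigma(p)>\sigma(q)$. Introduce the directed graph $G$ on vertex set $\{1,\ldots,n\}$ with edge set $\{(p,q):p\neq q,\ a_{pq}=1\}$. Then a suitable $\sigma$ exists if and only if $G$ admits a topological order, i.e., $G$ is acyclic. Moreover, the diagonal entries of $A$ are themselves the $1\times 1$ principal minors, hence all equal to $1$, so once we triangularize, the result is automatically unipotent.

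The plan is therefore to prove that the hypothesis on principal minors forces $G$ to be acyclic. I would argue by contradiction: suppose $G$ contains a directed cycle, and select one of minimal length, say $v_1\to v_2\to\cdots\to v_k\to v_1$ with $k\geq 2$. By minimality this cycle has no chord in $G$: any extra forward arc $v_i\to v_j$ with $j\not\equiv i+1\pmod k$ would splice the cycle into two shorter ones, and any reverse arc $v_j\to v_i$ would combine with a piece of the cycle to yield a strictly shorter cycle (for $k=2$ there is nothing to rule out). Consequently, the principal submatrix $B$ of $A$ indexed by $\{v_1,\ldots,v_k\}$ has $1$'s on the diagonal, $1$'s exactly at the $k$ off-diagonal positions dictated by the cycle arrows, and $0$'s elsewhere.

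To conclude, I would evaluate $\det B$ over $\Z_2$ directly from the Leibniz expansion. A permutation $\pi$ of $\{v_1,\ldots,v_k\}$ contributes a nonzero term iff $\pi(v_i)\in\{v_i,v_{i+1}\}$ for every $i$ (indices mod $k$); a short case analysis shows the only such $\pi$ are the identity and the $k$-cycle $v_i\mapsto v_{i+1}$. Both contribute $1$, so $\det B=1+1=0$ in $\Z_2$, contradicting the hypothesis that every principal minor of $A$ equals $1$ (and in particular, when $k=n$, this is where the standing hypothesis $\det A=1$ is used). Hence $G$ is acyclic and the required permutation $\sigma$ exists.

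The step requiring the most care is the minimality/no-chord argument: one has to verify that in a shortest cycle of $G$, no additional arcs (forward chords, backward arcs, or the reverse of a cycle edge, which would produce a $2$-cycle) can appear among the cycle vertices, so that the principal submatrix $B$ really has the clean companion-like form on which the determinant calculation rests. Once this is clear, the determinant evaluation is essentially one line, and the translation back from ``$G$ is a DAG'' to the existence of the conjugating permutation is immediate from the definitions.
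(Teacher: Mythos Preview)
Your argument is correct. The paper does not supply its own proof of this lemma; it simply cites Masuda--Panov \cite[Lemma 3.3]{Masuda91}. Your graph-theoretic reformulation is a clean, self-contained version of that standard argument: the essential point is exactly that a shortest directed cycle among the off-diagonal $1$'s yields a principal submatrix of the cyclic ``companion'' shape, whose determinant over $\Z_2$ is $1+1=0$ by the Leibniz expansion (only the identity and the full $k$-cycle contribute), contradicting the hypothesis on principal minors. The minimality/no-chord step is fine: any non-cycle arc $v_p\to v_q$ among cycle vertices closes up with the arc $v_q\to v_{q+1}\to\cdots\to v_p$ of the original cycle to give a cycle of length $1+((p-q)\bmod k)\le k-1$, so the principal submatrix really has the claimed form. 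One small remark: in the statement as written, ``every principal minor of $A$ is $1$'' already includes $\det A$, so the clause ``If $\det A=1$'' is redundant; it is stated separately only to parallel the $\det A=-1$ variant (Lemma~4.4), where the conclusion is different.
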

\begin{proof} see M.~Masuda and T.E.~Panov\cite[Lemma 3.3] {Masuda91}.
\end{proof}
  \begin{lem}
 The reduced matrix of a small cover over $n$-cube can be conjugated by a permutation matrix to a unipotent upper triangular matrix.
  \end{lem}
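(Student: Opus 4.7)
The plan is to verify the two hypotheses of Lemma 2.1 for the reduced matrix $A$ of a small cover over the $n$-cube, namely that every principal minor of $A$ equals $1$ and that $\det A = 1$. Once that is established, Lemma 2.1 immediately yields the desired conclusion.

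First, I would enumerate the vertices of the $n$-cube. Since the facets come in $n$ parallel pairs $\{F_i, F_i^*\}$, the vertices of the cube are in bijection with subsets $I \subseteq \{1,\ldots,n\}$: the vertex $v_I$ is the intersection of $\{F_i^* : i \in I\} \cup \{F_i : i \notin I\}$. The characteristic-map axiom then asserts that, for each such $I$, the vectors $\{\lambda(F_i^*) : i \in I\} \cup \{e_i : i \notin I\}$ form a $\Z_2$-basis of $\Z_2^n$.

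Next I would translate this basis condition into a property of $A$. Arranging these $n$ vectors as rows and reordering so that indices in $I$ come first (and likewise for the columns), one obtains a block matrix of the form
\[
\begin{pmatrix} A_I & * \\ 0 & E_{|I^c|} \end{pmatrix},
\]
whose determinant is $\det(A_I)$, where $A_I := (a_{ij})_{i,j \in I}$ is the principal submatrix of $A$ indexed by $I$. The basis condition therefore says exactly that $\det(A_I) = 1$. Letting $I$ range over all subsets of $\{1, \ldots, n\}$ shows that every principal minor of $A$ equals $1$, and in particular, by taking $I = \{1, \ldots, n\}$, that $\det A = 1$.

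With the hypotheses of Lemma 2.1 verified, that lemma supplies the permutation matrix conjugating $A$ into unipotent upper triangular form. The only content-bearing step is the bookkeeping that converts the basis condition at the vertex $v_I$ into the invertibility of the principal submatrix $A_I$; I expect this to be the main point requiring care, but no genuine obstacle arises beyond correctly identifying which permutation of rows and columns yields the block-triangular form above.
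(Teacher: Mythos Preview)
Your proposal is correct and follows essentially the same approach as the paper: both verify that every principal minor of $A$ equals $1$ by invoking the linear-independence condition of the characteristic map at each vertex of the cube, and then apply Lemma~2.1. The paper's proof simply asserts that the principal-minor condition follows ``easily'' from the vertex conditions, whereas you spell out the block-triangular computation explicitly; your version is a more detailed rendering of the same argument.
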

\begin{proof} Owing to the linear independence of characteristic maps of facets in every vertex, we can easily get that every principal minor of A is $1$. So, by the Lemma 2.1 above, the conclusion is obvious.
\end{proof}

\begin{lem}
     If we assume the conditions above, and if $l_1+\cdots+l_k=k+1$, where $1\le k\le n$.
     Then for 1-cohomology classes $u_1,\cdots,u_n$ of $M^n$, $u_1^{l_1}\cdots u_k^{l_k}=0$.
 \end{lem}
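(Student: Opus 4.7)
The plan is to extract a single key algebraic identity expressing $u_j^2$ in terms of products $u_i u_j$ with $i<j$, and then run a weight-based induction on monomials of total degree $k+1$.

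First I would extract the relation. By Lemma 2.2, after permuting facets we may assume the reduced matrix $A$ is unipotent upper triangular, so $a_{ii}=1$ and $a_{ij}=0$ for $i>j$. Reading off the $j$-th coordinate of (2) gives $y_j=u_j+\sum_i a_{ij}u_i$; the two copies of $u_j$ cancel in $\Z_2$ because $a_{jj}=1$, and upper triangularity kills the $i>j$ contributions, so $v_j=y_j+u_j=u_j+\sum_{i<j}a_{ij}u_i$. Since $F_j$ and $F_j^*$ are opposite (disjoint) facets of the cube, the Stanley-Reisner ideal contains $u_j v_j$, and substituting yields
\begin{align*}
0 \;=\; u_j v_j \;=\; u_j^2 + \sum_{i<j} a_{ij}\, u_i u_j,
\end{align*}
hence $u_j^2=\sum_{i<j}a_{ij}\,u_i u_j$; in particular $u_1^2=0$.

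Next I would run an induction on the weight $W(l_1,\ldots,l_k)=\sum_j j\cdot l_j$ over monomials of fixed total degree $k+1$ in $u_1,\ldots,u_k$. Since $\sum l_j=k+1>k$, pigeonhole always furnishes an index $q$ with $l_q\ge 2$. If $q=1$ the factor $u_1^2=0$ kills the monomial immediately. Otherwise $q\ge 2$, and rewriting one factor of $u_q^2$ via the identity gives
\begin{align*}
u_1^{l_1}\cdots u_k^{l_k} \;=\; \sum_{i<q} a_{iq}\, u_1^{l_1}\cdots u_i^{l_i+1}\cdots u_q^{l_q-1}\cdots u_k^{l_k},
\end{align*}
where each summand has the same total degree $k+1$ but strictly smaller weight (the weight drops by $q-i>0$). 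Strong induction on $W$, with the base case handled by the $q=1$ branch since $u_1^{k+1}$ is the only minimum-weight monomial and its leading exponent is $\ge 2$, finishes the argument.

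The main obstacle is exhibiting an invariant that makes the rewriting $u_q^2\mapsto\sum_{i<q}a_{iq}u_i u_q$ well-founded; the weight $W$ does the job because every substitution strictly decreases it while preserving total degree. A minor bookkeeping point is that Lemma 2.2 only puts $A$ in upper triangular form after relabelling the facets $F_1^*,\ldots,F_n^*$, but adopting that labelling from the outset preserves the statement of Lemma 2.3 in the new indexing, so no generality is lost.
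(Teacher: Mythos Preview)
Your proof is correct and rests on the same key identity $u_j^2=\sum_{i<j}a_{ij}u_iu_j$ (equivalently $u_j^2=u_jy_j$) coming from the Stanley--Reisner relation $u_jv_j=0$ together with the upper-triangular form of the reduced matrix. The organization of the induction, however, differs from the paper's. The paper inducts on $k$: it rewrites $u_{k+1}^{l_{k+1}}$ wholesale as $u_{k+1}y_{k+1}^{\,l_{k+1}-1}$, expands $y_{k+1}$ in $u_1,\dots,u_k$, and observes that the remaining factor is a degree-$(k+1)$ polynomial in $u_1,\dots,u_k$, to which the inductive hypothesis applies. You instead induct on the weight $W=\sum_j j\,l_j$, rewriting one square $u_q^2$ at a time. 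Your scheme makes the well-foundedness of the rewriting completely explicit and sidesteps the small side case $l_{k+1}=0$ that the paper leaves implicit; the paper's version is marginally shorter since it dispatches the highest-indexed variable in a single stroke. Either way the content is the same: triangularity pushes every square down to lower indices until only $u_1^2=0$ remains.
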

\begin{proof} If $k=n$, the equation is obvious because the degree of $u_1^{l_1}\cdots u_k^{l_k}$ is greater than the dimension of small cover.
  If $k< n$, this theorem can be proved by induction. When $k=1$, then $l_1=2$, the only equation one should 
  justify is $u_1^2=0$, which will be proved in Lemma 3.3 in a broader sense. By the Lemma 2.2, we can assume that the \emph{reduced matrix} of a small cover is a unipotent upper-diagonal matrix. This means that $y_k$ can be expressed by a linear combination of $u_1,\cdots,u_{k-1}$ , $1\le k\le n$. Assume that all the equations of number $k$ is correct, then consider equations of number $k+1$.
     \begin{align*}
      u_1^{l_1}\cdots u_{k+1}^{l_{k+1}}=u_1^{l_1}\cdots u_k^{l_k}u_{k+1}y_{k+1}^{l_{k+1}-1}.
       \end{align*}
  The degree of $u_1^{l_1}\cdots u_k^{l_k}y_{k+1}^{l_{k+1}-1}$ is $l_1+\cdots+l_{k+1}-1=k+1$.
  Since $y_{k+1}$ can be expressed by $u_1,\cdots,u_k$ ,  $u_1^{l_1}\cdots u_k^{l_k}y_{k+1}^{l_{k+1}-1}$ consists of  monomials with elements in ${u_1,\cdots,u_k}$ of degree $k+1$. So by induction, these 
polynomials are all zero, and $u_1^{l_1}\cdots u_{k+1}^{l_{k+1}}$ is zero.
\end{proof}
 \begin{lem}
     Given the conditions above, if $l_2+\cdots+l_k=k$,  then $y_2^{l_2}\cdots y_k^{l_k}=0$.
 \end{lem}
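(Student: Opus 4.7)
The plan is to reduce this to the previous Lemma~2.3 by rewriting each $y_j$ in the product as a combination of the $u_i$'s with smaller indices, then counting degrees.

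First I would invoke Lemma~2.2 to assume, after conjugating by a suitable permutation matrix, that the reduced matrix $A$ is unipotent upper triangular. Since we work over $\Z_2$, this means $A+E_n$ is strictly upper triangular. Reading off the relation~(2), we obtain
\begin{align*}
y_j \;=\; \sum_{i<j} a_{ij}\, u_i,
\end{align*}
so that each $y_j$ lies in the $\Z_2$-span of $\{u_1,\dots,u_{j-1}\}$. In particular $y_1=0$, and for every $j\ge 2$ appearing in the product, $y_j$ is a linear form in $u_1,\dots,u_{k-1}$.

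Next I would expand $y_2^{l_2}\cdots y_k^{l_k}$. By the previous observation, the expanded product is a $\Z_2$-linear combination of monomials $u_1^{m_1}\cdots u_{k-1}^{m_{k-1}}$ whose total degree equals $l_2+\cdots+l_k=k$, i.e.\ exactly $(k-1)+1$. Applying Lemma~2.3 with $k$ replaced by $k-1$ then forces every such monomial to vanish, so the whole expansion collapses to $0$.

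The only potential snag is the edge case $k=2$, where the argument must reduce to $y_2^2=0$; here $y_2=a_{12}u_1$, and since $a_{12}^2=a_{12}$ in $\Z_2$ together with $u_1^2=0$ (the base case of Lemma~2.3), this degenerate instance is consistent. So the essential step is just a degree-bookkeeping argument combined with the upper-triangular form of $A+E_n$, and no new ingredient is required beyond Lemmas~2.2 and~2.3.
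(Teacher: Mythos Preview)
Your proof is correct and follows essentially the same approach as the paper: both arguments expand each $y_j$ as a linear combination of $u_1,\dots,u_{j-1}$ (using the upper-triangular form of the reduced matrix), observe that the product becomes a sum of monomials in $u_1,\dots,u_{k-1}$ of total degree $k=(k-1)+1$, and then invoke Lemma~2.3. You are simply more explicit about the reduction to Lemma~2.3 and the edge case, but the content is the same.
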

\begin{proof} $y_2^{l_2}\cdots y_k^{l_k} $ consists of monomials with elements in ${u_1,\cdots,u_{k-1}}$ of degree $k$. So due to the above Lemma 2.3, we can obtain this equation.
\end{proof}

 \begin{thm}
      All the Stiefel-Whitney numbers of a small cover $M$ over n-cube is $0$, i.e. such a small cover is a boundary.
  \end{thm}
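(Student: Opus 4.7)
The plan is to combine the structural description of the relations in $H^*(M;\Z_2)$ coming from (2.2) with the vanishing results in Lemmas 2.3 and 2.4, and then invoke Thom's theorem that all Stiefel--Whitney numbers vanishing implies the manifold bounds.

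First I would observe that, after conjugating by the permutation matrix given by Lemma 2.2, the reduced matrix $A$ is unipotent upper triangular, so $A+E_n$ is \emph{strictly} upper triangular. Reading off the first column of the relation $(u_1,\dots,u_n)(A+E_n)=(y_1,\dots,y_n)$ then gives $y_1=0$, and for $k\ge 2$ the class $y_k$ is a linear combination of $u_1,\dots,u_{k-1}$ only. Consequently
\[
w(M)=(1+y_1)(1+y_2)\cdots(1+y_n)=(1+y_2)\cdots(1+y_n),
\]
so every Stiefel--Whitney class $w_j$ of $M$ is the elementary symmetric polynomial $e_j(y_2,\dots,y_n)$.

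Next I would analyze an arbitrary Stiefel--Whitney number $\langle w_{i_1}\cdots w_{i_r},[M]\rangle$ with $i_1+\cdots+i_r=n$. Expanding each $w_{i_s}=e_{i_s}(y_2,\dots,y_n)$, this number is a $\Z_2$-linear combination of top-degree monomials of the form $y_2^{l_2}\cdots y_n^{l_n}$ with $l_2+\cdots+l_n=n$. For each such monomial I let $k\le n$ denote the largest index with $l_k>0$; then $l_2+\cdots+l_k=n\ge k$. If $n=k$, Lemma 2.4 immediately gives $y_2^{l_2}\cdots y_k^{l_k}=0$. If $n>k$, I would choose $l_i'\le l_i$ with $l_2'+\cdots+l_k'=k$ and factor
\[
y_2^{l_2}\cdots y_k^{l_k}=\bigl(y_2^{l_2'}\cdots y_k^{l_k'}\bigr)\cdot\bigl(y_2^{l_2-l_2'}\cdots y_k^{l_k-l_k'}\bigr),
\]
where the first factor vanishes by Lemma 2.4. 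Either way, every monomial, and hence every Stiefel--Whitney number, is zero.

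Finally I would conclude by Thom's theorem that $M$ is a boundary. The only mildly non-routine point is the reduction of an arbitrary top-degree monomial in $y_2,\dots,y_n$ to one that Lemma 2.4 handles directly; the key conceptual input is really just that $y_1=0$, which forces the effective number of variables appearing in $w(M)$ to drop below $n=\dim M$, thereby putting every top-degree monomial into the range covered by Lemma 2.4.
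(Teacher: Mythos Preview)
Your proposal is correct and follows essentially the same route as the paper: use $y_1=0$ to write $w(M)=(1+y_2)\cdots(1+y_n)$, expand any top-degree Stiefel--Whitney monomial in $y_2,\dots,y_n$, and kill it with Lemma~2.4. The only difference is that your case split on the largest index $k$ with $l_k>0$ and the subsequent factoring argument are unnecessary: Lemma~2.4 does not require the exponents $l_i$ to be positive, so one may simply take $k=n$ there and conclude directly that every degree-$n$ monomial $y_2^{l_2}\cdots y_n^{l_n}$ vanishes, which is exactly what the paper does.
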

\begin{proof} Since $y_1=0$, the total Stiefel-Whitney class of $M$, $w=(1+{y_2})\cdots(1+{y_n})$. It is obvious that when $i_1+2i_2+\cdots+ni_n=n$,  $w_1^{i_1} \cdots w_n^{i_n}$consists of ploynomials in $y_2,\cdots, y_n$ of degree $n$. By the Lemma 2.4 above and the definition of Stiefel-Whitney numbers, this conclusion is proved.
\end{proof}

    \section{Cobordism of generalized real Bott manifold}
In this section, we will mainly use the terms defined in S.~Choi, M.~Masuda and D.Y.~Suh~\cite {Suh09}. Firstly, generalized real Bott manifold $M^n$ is defined to be a sequence of $\R P^k$-bundles of various numbers,
   \begin{align*}
         M_m\overset{\R P^{n_m}}\longrightarrow M_{m-1}\overset{\R P^{n_{m-1}}}{\longrightarrow}\cdots  \overset{\R P^{n_2}} \longrightarrow M_1\overset{\R P^{n_1}}\longrightarrow M_0=\{a\ point\},
       \end{align*}
such that  $M_i\longrightarrow M_{i-1}$ for $i=1,\cdots,m$ is the projective bundle of a Whitney sum of $n_i$ real line bundles over $M_i$. 
This definition is equivalent to another one that a generalized real Bott manifold is a small cover over a product of simplices $P^n$, where  $ P^n=\prod_{i=1}^{m}  \Delta^{n_i}$, with $\sum ^{m}_{i=1}{n_i}=n$, and $\Delta^{n_i}$ is the ${n_i}$-simplex for $i=1\cdots m$. Then each facet of $P^n$ is the product of a codimension-one face of one of the $\Delta^{n_i}$'s and the remaining simplices. Thus there are $n+m$ facets of $P$, which form the set
$\{ F_{k_i}^i| 0\le k_i\le n_i, 1\le i \le m \}$, where $F_{k_i}^i=\Delta^{n_1}\times\cdots\times\Delta^{n_{i-1}}\times f_{k_i}^i\times\Delta^{n_{i+1}}\times\Delta^{n_m}$. 
At each vertex $v_{{j_1}\cdots {j_m}}$, all facets except $\{ F_{j_i}^i| 1\le i\le m\}$ intersect with this vertex, where $v_{ {j_1}\cdots{j_m} }=v_{j_1}^1\times \cdots \times v_{j_m}^m$ and 
$v_{j_k}^k$ is the only vertex of $\Delta^{n_k}$ not contained in $f_{j_k}^k$.\\
Let $\lambda: \{ F_{k_i}^i| 0\le k_i\le n_i, 1\le i \le m \}\to \Z_2^n$ be the characteristic map of the small cover $M^n$,  the linear independence condition at vertex $v_{0\cdots0}$ means that 
$\lambda(F_1^1),\cdots,\lambda(F_1^{n_1}),\cdots,\lambda(F_m^1),\cdots,
\lambda(F_m^{n_m})$ consist of a basis of $\Z_2^n$, which is defined to be $\{e_1,\cdots,e_n\}$. Then $F_i^0$ for every $1\le i\le m$ can be written as a linear combination of 
$e_1,\cdots,e_n$.
We set $\lambda(F_i^0)=\mathbf{a}_i $ for $1\le i\le m$, where $\mathbf{a}_i $ is the coordinate of $\lambda(F_i^0)$ related to the basis $\{e_1,\cdots,e_n\}$.
In this way, we have the correspondence $m\times m$ \emph{reduced vector matrix} $A$ of small cover  
 $M^n$.
     \begin{align*}
     A=\begin{pmatrix}
        \mathbf{a}_1\\
        \vdots  \\
       \mathbf{a}_m \\  \end{pmatrix}
 =\begin{pmatrix}
        \mathbf{a}_1^1 & \cdots & \mathbf{a}_1^m   \\
        \vdots & \cdots & \vdots  \\
       \mathbf{a}_m^1& \cdots & \mathbf{a}_m^m\\  \end{pmatrix},   
where \ \mathbf{a}_i^j=(a_{i 1}^j,\cdots,a_{i n_i}^j).
 \end{align*}

We have more general versions of Lemma 2.1 and Theorem 2.2, which can be found in S.~Choi, M.~Masuda and D.Y.~Suh~\cite {Suh09} and also M.~Masuda and T.E.~Panov~\cite {Masuda91}. To state this lemma, the principal minor of a vector matrix is defined to be a principal minor of $A_{{k_1}\cdots{k_m}}$, where $1\le k_1\le n_1,\cdots,1\le k_m\le n_m$. $A_{{k_1}\cdots{k_m}}$ is defined in S.~Choi, M.~Masuda and D.Y.~Suh~\cite {Suh09}. For given $1\le k_j\le n_j$ with $j=1,\cdots,m$, $A_{{k_1}\cdots{k_m}}$ is the $m\times m$ submatrix of $A$ whose $j$-th column is the $k_j$-th column of the $m\times n_j$ matrix $({\mathbf{a}_1^j}^T,\cdots,{\mathbf{a}_m^j}^T)^T$. Thus 
     \begin{align*}
A_{{k_1}\cdots{k_m}}= \begin{pmatrix}
        a_{1{k_1}}^1& \cdots &a_{1{k_m}}^m   \\
              \vdots &   & \vdots \\
       a_{m{k_1}}^1 & \cdots & a_{m{k_m}}^m\\
   \end{pmatrix}. 
     \end{align*}

  \begin{lem}(S.~Choi, M.~Masuda and D.Y.~Suh~\cite {Suh09})
Let $A$ be an $m\times m$ reduced vector matrix with $\Z_2$-coefficient such that all the principal minors of $A$ are $1$, then $A$ is conjugate to a unipotent upper triangular vector matrix of the following form:
     \begin{align*}
 \begin{pmatrix}
        \mathbf{1} & \mathbf{b_1^2}&\mathbf{b_1^3} &\cdots & \mathbf{b_1^m}   \\
       \mathbf{0} & \mathbf{1}& \mathbf{b_2^3} &\cdots & \mathbf{b_2^m}  \\
        \vdots & \cdots &\cdots & \cdots  & \vdots \\
      \mathbf{0} & \cdots & \cdots & \mathbf{1} &\mathbf{b_{m-1}^m}\\
        \mathbf{0} & \cdots & \cdots & \mathbf{0}  & \mathbf{1}\\
   \end{pmatrix}  
     \end{align*}
where $\mathbf{0}=(0,\cdots,0)$, $\mathbf{1}=(1,\cdots,1)$ of appropriate sizes.
  \end{lem}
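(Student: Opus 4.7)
The plan is to reduce the vector-matrix assertion to the scalar case handled by Lemma 2.1, by extracting suitable scalar matrices $A_{k_1\cdots k_m}$. First I would observe that the $1\times 1$ principal minors of $A$ are precisely the entries $a_{ik_i}^i$ of the diagonal blocks; since all principal minors equal $1$, this immediately forces every diagonal block to satisfy $\mathbf{a}_i^i=\mathbf{1}$. Second, for any choice of indices $k_1,\dots,k_m$ the scalar matrix $A_{k_1\cdots k_m}$ inherits from $A$ the property that each of its principal minors equals $1$; in particular $\det A_{k_1\cdots k_m}=1$, so Lemma 2.1 furnishes a permutation $\sigma_{k_1\cdots k_m}$ that conjugates it to unipotent upper triangular form.

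The crux is to show that this permutation can be chosen independently of $(k_1,\dots,k_m)$. Set $\sigma:=\sigma_{1\cdots 1}$ and conjugate $A$ by the induced block permutation $P_\sigma$, which simultaneously permutes the $m$ vector rows and columns; after this normalization it suffices to prove that every below-diagonal block $\mathbf{a}_i^j$ with $i>j$ vanishes, since the diagonal blocks are already $\mathbf{1}$ and the above-diagonal blocks may be arbitrary $\mathbf{b}_j^i$. I would proceed by induction on $m$: the base $m=1$ is immediate. For the inductive step, I would first show that the last row of below-diagonal blocks $\mathbf{a}_m^j$ for $j<m$ is identically $\mathbf{0}$. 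Given any candidate nonzero entry $a_{mk_j}^j$, form the scalar matrix $A_{1,\dots,k_j,\dots,1}$ with $k_j$ in the $j$-th slot and $1$ elsewhere; then Lemma 2.1 together with the known zero-pattern of the already-triangular $A_{1\cdots 1}$ constrains the triangulating permutation of this new matrix to agree with $\sigma$ at the positions that matter, forcing $a_{mk_j}^j=0$. Once the last row of below-diagonal blocks vanishes, the principal $(m-1)\times(m-1)$ submatrix is itself a reduced vector matrix satisfying the same hypotheses, and the inductive hypothesis concludes the argument.

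The main obstacle is exactly this uniformity step: Lemma 2.1 individually triangulates each $A_{k_1\cdots k_m}$, but a priori with different permutations. Showing these cohere into a single $\sigma$ is the delicate point, and I expect it to follow from a rigidity argument exploiting the combinatorial structure of the directed graph whose edges record the supports of the below-diagonal entries. The uniform principal-minor hypothesis should force this graph to be acyclic in a way compatible across every choice of $(k_1,\dots,k_m)$, pinning down a common triangulating order and hence a single block permutation that brings $A$ to the prescribed unipotent upper triangular form.
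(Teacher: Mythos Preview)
The paper does not supply its own proof of this lemma: it is stated with attribution to Choi--Masuda--Suh~\cite{Suh09} (and implicitly to Masuda--Panov~\cite{Masuda91}) and used as a black box. So there is no in-paper argument to compare your proposal against.

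That said, your outline has a genuine gap at precisely the point you flag. After conjugating so that $A_{1\cdots 1}$ is unipotent upper triangular, you want to deduce $a^j_{m k_j}=0$ for $j<m$ by looking at $A_{1,\dots,k_j,\dots,1}$ and invoking Lemma~2.1 again. But Lemma~2.1 only tells you that \emph{some} permutation triangulates this new scalar matrix; it does not say the permutation must fix the last row, and changing the $j$-th column can disturb the zero pattern above the diagonal as well as below it. Your appeal to a ``rigidity argument'' on a directed graph is the right instinct but is not an argument as written. What actually makes the proof go through is the direct observation that if two distinct off-diagonal blocks $\mathbf{a}_i^j$ and $\mathbf{a}_j^i$ were both nonzero, one could choose $k_i,k_j$ so that the corresponding $2\times 2$ principal minor is $\begin{pmatrix}1&1\\1&1\end{pmatrix}$, which has determinant $0$ over $\Z_2$ --- contradiction. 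More generally, any directed cycle among the indices forces some principal minor of the associated scalar matrix to vanish. This acyclicity yields a single topological order, hence a single permutation, and the induction you sketch then closes cleanly. Replacing your heuristic ``uniformity'' paragraph with this acyclicity argument would make the proposal complete.
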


  \begin{lem}
 The reduced vector matrix of a small cover $M^n$ over a product of simplices $P^n$ can be conjugated by a permutation vector matrix to a unipotent upper triangular vector matrix.
  \end{lem}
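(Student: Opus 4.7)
The plan is to copy the structure of the proof of Lemma~2.2 almost verbatim: verify that the reduced vector matrix $A$ satisfies the hypothesis of Lemma~3.1, namely that every principal minor of $A$ equals $1$ over $\Z_2$, and then invoke Lemma~3.1 directly to obtain a permutation vector matrix conjugating $A$ into unipotent upper triangular form.

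The main step is therefore to show, for every nonempty subset $S\subseteq\{1,\ldots,m\}$ and every choice of indices $k_j\in\{1,\ldots,n_j\}$ for $j\in S$, that the $|S|\times|S|$ submatrix $(a_{i,k_j}^{\,j})_{i,j\in S}$ has determinant $1$. To accomplish this, I would consider the vertex $v_{j_1\cdots j_m}$ of $P^n$ defined by $j_i=k_i$ for $i\in S$ and $j_i=0$ for $i\notin S$. By the description of vertices recalled just before the statement, the facets meeting at this vertex are precisely the $F_k^i$ with $k\ne j_i$. For $i\notin S$ this yields all of $F_1^i,\ldots,F_{n_i}^i$, whose characteristic vectors are exactly the $n_i$ standard basis vectors from the $i$-th block of $\{e_1,\ldots,e_n\}$; for $i\in S$ it yields $F_0^i$, contributing $\lambda(F_0^i)=\mathbf{a}_i$, together with the basis vectors of the $i$-th block whose column index differs from $k_i$. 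Writing these $n$ characteristic vectors as columns with respect to the basis $\{e_1,\ldots,e_n\}$, and permuting rows and columns so that the $n-|S|$ standard basis contributions occupy the first block, produces a matrix of the form
\begin{align*}
\begin{pmatrix} E_{n-|S|} & * \\ 0 & (a_{i,k_j}^{\,j})_{i,j\in S} \end{pmatrix},
\end{align*}
whose determinant equals that of the lower right block. Since the characteristic condition at $v_{j_1\cdots j_m}$ forces these $n$ vectors to form a basis of $\Z_2^n$, this determinant must equal $1$, yielding exactly the principal minor identity required.

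Once every principal minor of $A$ is in hand, Lemma~3.1 applies to $A$ and furnishes both the permutation vector matrix and the unipotent upper triangular form, completing the proof. I do not anticipate any serious difficulty beyond careful indexing: the argument is the natural generalization of the $n$-cube case treated in Lemma~2.2, the only new feature being that the ``missing'' basis vector at the chosen vertex is now specified by a pair (a block index $i\in S$ together with a column index $k_i$ within $\Delta^{n_i}$) rather than being uniquely determined by a single coordinate.
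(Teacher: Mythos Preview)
Your proposal is correct and follows essentially the same strategy as the paper: verify that the linear independence of the characteristic vectors at each vertex $v_{j_1\cdots j_m}$ (taking $j_i=k_i$ for $i\in S$ and $j_i=0$ otherwise) forces every principal minor of the reduced vector matrix to be $1$, and then invoke Lemma~3.1. The paper compresses this into three sentences, while you spell out the block decomposition explicitly; the underlying argument is the same.
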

\begin{proof} By the linear independence of characteristic maps in vertex $v_{{k_1}\cdots{k_m}}$, the determinant of $A_{{k_1}\cdots{k_m}}$ is 
nonzero. More generally,  because of the linear independence of characteristic maps in vertex $v_{{k_1}\cdots{k_l}0\cdots0}$ for $l\le m$, the principal minors of $A_{{k_1}\cdots{k_m}}$ are nonzero, which leads to the conclusion.
\end{proof}

 Let the $1$-cohomology classes $v_i^{(0)},\cdots,v_i^{(n_i)}$ dual to 
     the facial submanifolds of facets $F_{0}^i,\cdots,F_{n_i}^i$ for $1\le i \le m$. For convenience, we can define: $u_i=v_i^{(0)}$. From the restrictions given by $J$ and the unipotent upper triangular vector matrix, $v_i^{(j)}$ is the sum of $u_i$ and the linear combination of $u_1,\cdots,u_{i-1}$. In the following theorems, we will use $y_j$ defined in $v_i^{(j)}=u_i+y_j$ if there is no ambiguity. Then $y_j$ is a linear combination of $u_1,\cdots, u_{j-1}$.

Given these conditions, we can extend the Lemma 2.3 of the last section to a broader one. 
  \begin{lem}
   For every positive integer $k\le m$, if  $l_1+\cdots+l_k=\sum ^{k}_{i=1}{n_i}+1$, 
     \begin{align}
 u_1^{l_1}\cdots u_k^{l_k}=0.
     \end{align}
  \end{lem}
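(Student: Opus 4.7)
The plan is to induct on $k$, mirroring the cube-case argument of Lemma 2.3 but replacing the two-facet relation $u_iv_i=0$ by the full Stanley--Reisner monomial relation
$$ v_i^{(0)}v_i^{(1)}\cdots v_i^{(n_i)}=0 $$
coming from each simplex factor $\Delta^{n_i}$ of $P^n$. Substituting $v_i^{(0)}=u_i$ and $v_i^{(j)}=u_i+y_j^{(i)}$, where each $y_j^{(i)}$ is a $\Z_2$-linear combination of $u_1,\ldots,u_{i-1}$ by virtue of the unipotent upper triangular form of the reduced vector matrix, this relation reads
$$ u_i\prod_{j=1}^{n_i}\bigl(u_i+y_j^{(i)}\bigr)=0. $$
Expanding it lets me rewrite $u_i^{n_i+1}$ as a $\Z_2$-sum of monomials of the form $u_i^{n_i+1-s}\cdot q$ with $s\geq 1$ and $q$ a monomial in $u_1,\ldots,u_{i-1}$ of degree $s$. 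Iterating this rewrite reduces any $u_i^{\ell}$ with $\ell>n_i$ to a $\Z_2$-combination of monomials in $u_1,\ldots,u_i$ of the same total degree and with $u_i$-exponent at most $n_i$.

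For the base case $k=1$ there are no earlier variables, so every $y_j^{(1)}$ vanishes and the relation collapses to $u_1^{n_1+1}=0$, which is exactly the content of the lemma for $k=1$.

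For the inductive step I first upgrade the inductive hypothesis to its ``$\geq$'' form: any monomial $u_1^{l'_1}\cdots u_{k-1}^{l'_{k-1}}$ with $l'_1+\cdots+l'_{k-1}\geq n_1+\cdots+n_{k-1}+1$ vanishes, since one may always split off a sub-monomial of the exact threshold degree. Then, given $u_1^{l_1}\cdots u_k^{l_k}$ with $l_1+\cdots+l_k=n_1+\cdots+n_k+1$, I split on $l_k$. If $l_k\leq n_k$, then already $l_1+\cdots+l_{k-1}\geq n_1+\cdots+n_{k-1}+1$, and the strengthened hypothesis closes the case. If instead $l_k>n_k$, applying the rewrite above to the factor $u_k^{l_k}$ converts the whole monomial into a $\Z_2$-sum of monomials in $u_1,\ldots,u_k$ with $u_k$-exponent at most $n_k$ and unchanged total degree $n_1+\cdots+n_k+1$; each resulting monomial has its $u_1,\ldots,u_{k-1}$ part of degree at least $n_1+\cdots+n_{k-1}+1$, and the strengthened hypothesis finishes.

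The main obstacle is purely bookkeeping: I need to verify that the reduction applied to $u_k^{l_k}$ preserves total degree while strictly lowering the $u_k$-exponent (so that the procedure terminates), and that the ``$\geq$'' strengthening is precisely what absorbs the tails of the expansions. Beyond this, the structure of the argument is entirely parallel to Lemma 2.3, with the single Stanley--Reisner relation per factor carrying all the new content.
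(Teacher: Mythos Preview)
Your proposal is correct and follows essentially the same approach as the paper's own proof: induct on $k$, use the Stanley--Reisner relation $u_i\prod_{j=1}^{n_i}(u_i+y_j^{(i)})=0$ from each simplex factor to reduce large powers of $u_k$, and split the inductive step according to whether $l_k\le n_k$ or $l_k>n_k$. You are in fact slightly more explicit than the paper in noting that the inductive hypothesis must be used in its ``$\geq$'' form, which the paper invokes tacitly.
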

\begin{proof}  It is obvious to prove the equation when $k=m$. While $k<m$, it can be proved by induction. When $k=1$, the equation is equivalent to $u_1^{n_1+1}=0$. Because of the elements from the first column to the $n_1$-th column of the \emph{reduced matrix}, we have: $v_1^{(j)}=v_1^{(0)}=u_1$, the restriction of the Stanley-Reinser ideal means $0=\prod _{j=0}^{n_1}v_1^{(j)}=u_1^{n_1+1}$. Assume that the equation is true at every number $\le k$, then consider $u_1^{l_1}\cdots u_{k+1}^{l_{k+1}}$. If $l_{k+1}\le n_{k+1}$, then $l_1+\cdots+l_k \ge n_1+\cdots+n_k+1$, it can be proved by the induction hypothesis. If $l_{k+1}\ge n_{k+1}+1$, we first have: 
     \begin{align*}
0=v_{k+1}^{(0)}\cdots v_{k+1}^{(n_{k+1})}=u_{k+1}(u_{k+1}+y_1)\cdots (u_{k+1}+y_{n_{k+1}})\\
=u_{k+1}^{n_{k+1}+1}+u_{k+1}^{n_{k+1}}a_1+\cdots+u_{k+1}a_{n_{k+1}},
 \end{align*}
where $a_1,\cdots ,a_{n_{k+1}}$ are some homogenous polynomials in elements $u_1,\cdots ,u_k$, $deg(a_i)=i$, since $y_j$ is a linear combination of $u_1,\cdots, u_{j-1}$.
     \begin{align*}
       u_{k+1}^{l_{k+1}}=-u_{k+1}^{l_{k+1}-(n_{k+1}+1)}(u_{k+1}^{n_{k+1}}a_1+\cdots+u_{k+1}a_{n_{k+1}}),
 \end{align*}
Iterate this process, $u_{k+1}^{l_{k+1}}$ can be  written as 
 $u_{k+1}^j f_j$, where $deg(f_j)=l_{k+1}-j$ and $j\le n_{k+1}$. Since $f_j$ is a homogenous polynomial over $u_1,\dots, u_k$, we can compute the degree of $f_j u_1^{l_1}\cdots u_k^{l_k}$:
      \begin{align*}
                                deg(f_j{u_1^{l_1}}\cdots u_k^{l_k})=l_1+\cdots+l_{k+1}-j\ge n_1+\cdots+n_k+1,
 \end{align*}
which means $f_j{u_1^{l_1}}\cdots u_k^{l_k}$ is zero by induction. 
\end{proof}

Then we can use this lemma to give a more general theorem.
 \begin{thm}
      Let $P^n$ be any product of simplices. Then any small cover $N$ over $P^n\times \Delta^1$ has identically zero Stiefel-Whitney numbers, i.e. is a boundary.
  \end{thm}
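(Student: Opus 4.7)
The plan is to reduce every top Stiefel--Whitney number of $N$ to a polynomial identity in the classes $u_1,\ldots,u_m$ associated to the $P^n$ factor, and then kill that identity using Lemma 3.3. By Lemma 3.2, after permuting the simplex factors (keeping $\Delta^1$ as the last, so $n_{m+1}=1$) I may assume the reduced vector matrix of $N$ is unipotent upper triangular. Setting $u_i=v_i^{(0)}$ for $1\le i\le m+1$, the relations coming from the ideal $J$ then give $v_i^{(k)}=u_i+y_{i,k}$, where each $y_{i,k}$ is a linear combination of $u_1,\ldots,u_{i-1}$. In particular the single class $y:=y_{m+1,1}$ arising from the $\Delta^1$ factor is a linear combination of $u_1,\ldots,u_m$ only.

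The key step is that the pair of factors of $w(N)$ produced by $\Delta^1$ collapses and eliminates $u_{m+1}$. The Stanley--Reisner relation for the $\Delta^1$ factor is $u_{m+1}(u_{m+1}+y)=0$, i.e.\ $u_{m+1}^2=u_{m+1}y$ in $H^*(N;\Z_2)$. Over $\Z_2$ this yields
\begin{align*}
(1+u_{m+1})(1+u_{m+1}+y)=1+y+u_{m+1}^2+u_{m+1}y=1+y.
\end{align*}
Since the remaining factors of the Davis--Januszkiewicz product $w(N)=\prod_{i,k}(1+v_i^{(k)})$ only involve $u_1,\ldots,u_m$, the total Stiefel--Whitney class $w(N)$ is represented by a polynomial purely in $u_1,\ldots,u_m$; the variable $u_{m+1}$ drops out completely.

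With $u_{m+1}$ eliminated, every Stiefel--Whitney number of $N$ is the evaluation on $[N]$ of a homogeneous polynomial of degree $n+1$ in $u_1,\ldots,u_m$. Applying Lemma 3.3 with $k=m$ to $N$ (viewed as a small cover over the product of $m+1$ simplices), each monomial $u_1^{l_1}\cdots u_m^{l_m}$ with $l_1+\cdots+l_m=n+1=n_1+\cdots+n_m+1$ vanishes in $H^*(N;\Z_2)$. Consequently every Stiefel--Whitney number of $N$ is zero, and $N$ is an unoriented boundary.

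The only delicate point is the cancellation $(1+u_{m+1})(1+u_{m+1}+y)=1+y$, which crucially uses both $\Z_2$-coefficients and the fact that $\Delta^1$ contributes exactly two facets, so the Stanley--Reisner relation collapses the entire $\Delta^1$-contribution in one step. For $\Delta^k$ with $k\ge 2$ the analogous telescoping no longer produces a class independent of $u_{m+1}$, which explains why Theorem 3.6 needs a considerably more restrictive hypothesis.
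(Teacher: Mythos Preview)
Your argument is correct and follows essentially the same route as the paper's proof: collapse the $\Delta^1$ contribution $(1+v_{m+1}^{(0)})(1+v_{m+1}^{(1)})$ via the Stanley--Reisner relation to a class in $u_1,\ldots,u_m$ alone, then apply Lemma~3.3 with $k=m$ to kill every degree-$(n+1)$ monomial. The paper writes the collapse as $1+v_{m+1}^{(0)}+v_{m+1}^{(1)}$ using $v_{m+1}^{(0)}v_{m+1}^{(1)}=0$ directly, while you phrase it as $u_{m+1}^2=u_{m+1}y$; these are the same computation, and your final remark about why the $\Delta^1$ case is special is a nice addition.
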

\begin{proof}  The Stiefel-Whitney class of $N, \\
w(N)=(\prod_{i=1}^{m}(1+v_i^{(0)})\cdots(1+v_i^{(n_i)}))(1+v_{m+1}^{(0)})(1+v_{m+1}^{(1)})$.  
      \begin{align*}
      (1+v_{m+1}^{(0)})(1+v_{m+1}^{(1)})=1+v_{m+1}^{(0)}+v_{m+1}^{(1)},       
      \end{align*}
where $v_{m+1}^{(0)}+v_{m+1}^{(1)}$ is a linear combination of $u_1\cdots u_m$. 
So when $i_1+2i_2+\cdots+(n+1)i_{n+1}=n+1$,  
         $w_1^{i_1} \cdots w_{n+1}^{i_{n+1}}$ consists of forms like $u_1^{l_1}\cdots u_m^{l_m}$, where $l_1+\cdots+l_m=\sum ^m_{i=1}{n_i}+1=n+1$.
By the Lemma 3.3 above, we can conclude that $w_1^{i_1} \cdots w_{n+1}^{i_{n+1}}$ is zero for every $i_1,\cdots,i_{n+1}$ satisfying  $i_1+2i_2+\cdots+(n+1)i_{n+1}=n+1$, which implies that every Stiefel-Whitney number is zero.
\end{proof}
Unlike the real Bott manifolds, a small cover $N$ over $P^n\times \Delta^1$ may not bound equvariantly. Z.~L\"u and L.~Yu ~\cite{LuZhi11} give examples of small covers over $\Delta_2\times\Delta_1$, which do not bound equivariantly.

Indeed, there is a natural $\R P^1$-bundle on $N$. Note that $\R P^1\cong S^1$, if $N$ is a  $S^1$-principal bundle, we can directly deduce that $N$ is the boundary of a disk bundle. But actually we know the fact that if this $S^1$-bundle $N$ is not an orientable bundle, then this $S^1$-bundle $N$ is not principal. 
Furthermore, if $N$ is not orientable, while the related small cover over $P^n$ is orientable, this kind of $N$ is not a principal bundle. The following proposition tells us how to judge the orientability of a real Bott manifold in the terms of its \emph{reduce matrix}.
 \begin{prop} (Y.~Kamishima and M.~Masuda \cite [Lemma 2.2]{Kamishima09})
      The real Bott manifold $M^n$ is orientable if and only if the sum of entries is zero in $\Z_2$ for each column of $E_n+A$, where $A$ is the reduced matrix of $M^n$ and $E_n$ is the identity matrix of order $n$.
  \end{prop}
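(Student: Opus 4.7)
The plan is to use the standard criterion that a closed manifold is orientable if and only if its first Stiefel-Whitney class vanishes, and then to evaluate $w_1(M^n) \in H^1(M^n;\Z_2)$ explicitly using the formulas of Section 2.

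Concretely, the identity $w(M^n) = (1+y_1)\cdots(1+y_n)$ established just before Lemma 2.1 gives $w_1 = y_1 + \cdots + y_n$. Relation (2), $(y_1, \ldots, y_n) = (u_1, \ldots, u_n)(E_n + A)$, then lets me substitute each $y_j$ as a $\Z_2$-linear combination of $u_1, \ldots, u_n$; summing over $j$ and collecting coefficients rearranges this into $w_1 = \sum_{i=1}^{n} c_i\, u_i$, where $c_i$ is precisely the sum of the entries in the $i$-th column of the matrix $E_n + A$.

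The main (mildly delicate) obstacle is to verify that $\{u_1, \ldots, u_n\}$ is a $\Z_2$-basis of $H^1(M^n;\Z_2)$. For this I would invoke Theorem 2.2: since the Stanley-Reisner ideal $I$ of the $n$-cube has no generators in degree one, while the $n$ generators of $J$ furnish $n$ linearly independent linear relations of the form $v_k = \sum_i a_{ik}\, u_i$, expressing each $v_k$ in terms of the $u_i$'s, a rank count gives $\dim_{\Z_2} H^1(M^n;\Z_2) = n$ with $\{u_1, \ldots, u_n\}$ as a basis. Once the basis is in hand, $w_1 = \sum_i c_i\, u_i$ vanishes if and only if every $c_i = 0$, which is exactly the column-sum condition on $E_n + A$ in the statement.
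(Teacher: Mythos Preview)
Your approach is essentially identical to the paper's: compute $w_1=\sum_j y_j$, expand via equation~(2), and read off the coefficient of each $u_i$; your explicit verification that $\{u_1,\ldots,u_n\}$ is a $\Z_2$-basis of $H^1(M^n;\Z_2)$ is a small addition the paper leaves implicit. One minor slip worth noting: carrying out the sum $\sum_j y_j=\sum_j\sum_i u_i(E_n+A)_{ij}$ shows that $c_i=\sum_j (E_n+A)_{ij}$ is the $i$-th \emph{row} sum of $E_n+A$, not the column sum --- the paper's own proof in fact ends with the row-sum condition $\sum_j k_{ij}=0$, so the word ``column'' in the statement appears to be a typo there as well.
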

\begin{proof}  By the computations in Section 2,  we have the first Stiefel-Whitney class\\$w_1(M^n)=\sum_{j=1}^n y_j$. Let $k_{ij}$ be the $i$-th row and the $j$-th column element in the matrix $A+E_n$, then by the Equation (2) in Section 2,
\begin{align*}
 w_1(M^n)=\sum_{j=1}^n y_j=\sum_{j=1}^n \sum_{i=1}^n u_i k_{ij}=\sum_{i=1}^n u_i(\sum_{j=1}^n k_{ij}).
\end{align*}
Since a manifold is orientable if and only if its first Stiefel-Whitney class is zero, we could see that $M^n$ is orientable if and only if $\sum_{j=1}^n k_{ij}=0$ for every $i$.  
\end{proof}
It is easy to see that there exist many upper triangular matrices of order $n+1$ that the sum of entries is zero for the first $n$ columns, then we can just add the $n+1$-column with the sum not equal to $0$ and get the whole matrix $A+E_n$, where the $S^1$-bundle $N$ is not an orientable bundle. There is an example of such matrix,
     \begin{align*}
A=\begin{pmatrix}
        1& 0 &1  \\
            0 &  1 & 0\\
       0 & 0& 1\\
   \end{pmatrix}. 
     \end{align*} 
In this example, the related small cover over $P^n$ is a $2$-torus $T^2$, but $N$ is an unoriented $S^1$-bundle over $T^2$.
  
%The theorem 3.4 raises a question that in what case the small cover over $P^n\times \Delta^k$ has identically zero Stiefel-Whitney numbers for every product of simplices $P^n$, where $k$ is any positive integer.
%It is easy to see it is not true when $k$ is even. But when $k$ is odd, this problem is even more complicated. If the reduced matrix is identity, it is obvious to see every product of simplices satisfies the condition. But without this hypothesis, we can not give a general condition. But fortunately, we can give a sufficient condition for certain kind of odd numbers.

%Firstly, we need a lemma for the existence of an nonzero solution of polynomials equations.
%  \begin{lem}
 %  For every positive number $k, y_1,\cdots ,y_{2^k}$ are linear combinations of $u_1$ with ${\Z_2}$-coefficients. $\sigma_1,\cdots, \sigma_{2^k-1}$ are elementary polynomials of  $y_1,\cdots ,y_{2^k}$. Then equations $\sigma_1=\cdots=\sigma_{2^k-1}=0$ have nonzero solution.
%  \end{lem}
%\begin{proof} Let $y_1=\cdots =y_{2^k}=u_1$, then we have $\sigma_j=(_j^{2^k})u_1^j=0$, where $1\le j\le 2^k-1$.
%\end{proof}
We can also give a sufficient condition to determine when the small cover over $P^n\times \Delta^l$ has identically zero Stiefel-Whitney numbers. This condition is only related to the elementary symmetric polynomials of $y_1,\cdots ,y_l$, where $y_j$ is defined by $v_{m+1}^{(j)}=u_{m+1}+y_j, 1\le j\le l$.
 \begin{thm}
Let $l=2^k-1$ be a fixed number. For any product of simplices $P^n$,  there exists a set of equations of $y_1,\cdots, y_l$ that the small cover over $P^n\times \Delta^l$ with the reduced matrix satisfying the equations will have identically zero Stiefel-Whitney numbers. %Furthermore, this set of equations has a nonzero solution, which means the reduced matrix related to this solution is not an identity .
 \end{thm}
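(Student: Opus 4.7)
The plan is to extend the argument of Theorem~3.4: impose a condition on $y_1,\ldots,y_l$ that forces $w(N)$ into the subring $\F_2[u_1,\ldots,u_m]$, and then invoke Lemma~3.3 to annihilate every top-degree monomial in the $w_i(N)$'s. Write
\[
w(N)=W_P\cdot\prod_{j=0}^{l}\bigl(1+v_{m+1}^{(j)}\bigr),
\]
where $W_P=\prod_{i=1}^{m}\prod_{j=0}^{n_i}(1+v_i^{(j)})$ already lies in $\F_2[u_1,\ldots,u_m]$ by the reduction used in Theorem~3.4. Setting $y_0=0$ and $P(x):=\prod_{j=0}^{l}(x+y_j)$, the second factor equals $P(1+u_{m+1})$, while the Stanley--Reisner relation on the $\Delta^l$-block reads exactly $P(u_{m+1})=0$ in $H^*(N;\Z_2)$.

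The crucial observation is that $l+1=2^{k}$. The equations I propose are those that force $P(x)$ to be an $\F_2$-\emph{linearized} polynomial, $P(x)=\sum_{i=0}^{k}c_i x^{2^{i}}$; in terms of elementary symmetric functions this is the finite system
\[
e_i(y_1,\ldots,y_l)=0\qquad\text{for every}\ i\in\{1,\ldots,l\}\ \text{with}\ l+1-i\ \text{not a power of }2,
\]
a set of polynomial equations in the entries of the reduced vector matrix. Geometrically it asks that $\{0,y_1,\ldots,y_l\}$ be an $\F_2$-subspace of $H^1(N;\Z_2)$. Under this hypothesis Frobenius gives $P(a+b)=P(a)+P(b)$, so
\[
P(1+u_{m+1})=P(1)+P(u_{m+1})=P(1),
\]
which is a polynomial in $y_1,\ldots,y_l$, hence in $u_1,\ldots,u_m$. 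Therefore $w(N)=W_P\cdot P(1)$ lives entirely in $\F_2[u_1,\ldots,u_m]$, each $w_r(N)$ is a polynomial in $u_1,\ldots,u_m$, and any top-degree product $w_1^{i_1}\cdots w_{n+l}^{i_{n+l}}$ (with $i_1+2i_2+\cdots+(n+l)i_{n+l}=n+l$) has total degree $n+l\ge n+1=\sum n_i+1$ in these generators, so it vanishes by Lemma~3.3.

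The step that needs the most care is the translation between ``$P(x)$ is $\F_2$-linearized'' and the explicit list of vanishing $e_i$'s. This is standard for linearized polynomials over $\F_2$, but needs attention here because the $y_j$'s are cohomology classes rather than elements of a field; I would establish it by induction on $k$, writing $V=W\oplus\langle v\rangle$ and $P_V(x)=P_W(x)\,P_W(x+v)$, and using that squaring and multiplication by a scalar in the cohomology ring both preserve the linearized form. Identifying the allowed exponents as exactly $\{1,2,4,\ldots,2^k\}$ then pins down which $e_i$'s must be killed, yielding the promised set of equations. Everything after that is the cohomological bookkeeping already developed in Theorem~3.4.
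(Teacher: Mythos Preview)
Your proposal is correct and follows essentially the same route as the paper. The paper imposes exactly the same system of equations $\sigma_i=0$ for $i\neq l+1-2^j$, expands $\prod_j(1+u_{m+1}+y_j)=\sum_i(1+u_{m+1})^{l+1-i}\sigma_i$, uses $(1+u_{m+1})^{2^j}=1+u_{m+1}^{2^j}$ over $\F_2$ together with the Stanley--Reisner relation to eliminate $u_{m+1}$, and then finishes with Lemma~3.3 as you do; your ``$P$ is additive, so $P(1+u_{m+1})=P(1)+P(u_{m+1})=P(1)$'' is precisely the conceptual packaging of that computation. One remark: your final worry about translating ``$P$ is linearized'' into the list of vanishing $e_i$'s is overkill, since $P(x)=x^{l+1}+e_1x^{l}+\cdots+e_lx$ formally, so the exponents present are exactly those $l+1-i$ with $e_i\neq0$, and no field hypothesis or inductive argument on $k$ is needed.
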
 
%Then the small cover N over $P^n\times \Delta^l$ has identically zero Stiefel-Whitney numbers if the reduced matrix satisfies a set of equations $\{ \sigma_i=0|1\le i\le l, i\notin l+1-2^j, j=0\cdots k\}$, where  $\sigma_1,\cdots, \sigma_l$ are elementary symmetrical polynomials of  $y_1,\cdots ,y_l$, and $y_1,\cdots ,y_l$ are linear combinations of $u_1, \cdots, u_m$ related to the reduced matrix with ${\Z_2}$-coefficient . This set of equations has an nonzero solution. Then for every $l$ defined above, there exists an nonzero solution of $y_1,\cdots, y_l$ that the small cover over $P^n\times \Delta^l$ with  reduced matrix satisfying the solution will have identically zero Stiefel-Whitney numbers.  

\begin{proof} Assume that $y_1,\cdots, y_l$ satisfy a set of equations $\{ \sigma_i=0|1\le i\le l, i\notin l+1-2^j, j=0\cdots k\}$, where  $\sigma_1,\cdots, \sigma_l$ are elementary symmetric polynomials of  $y_1,\cdots ,y_l$.\\
The Stiefel-Whitney class of $N$, \\
$ w(N)=(\prod_{i=1}^{m}(1+v_i^{(0)})\cdots(1+v_i^{(n_i)}))(1+u_{m+1})(1+u_{m+1}+y_1)\cdots(1+u_{m+1}+y_l).$
\begin{align*}
       (1+u_{m+1})(1+u_{m+1}+y_1)\cdots(1+u_{m+1}+y_l)
       &\\=(1+u_{m+1})^{l+1}+(1+u_{m+1})^l\sigma_1+\cdots+(1+u_{m+1})\sigma_l.
 \end{align*}
Only if $j$ is the power of $2$, $(1+u_{m+1})^j$ will exactly contain $1$ and the term of the highest degree. So, since $\sigma_i=0$, when $1\le i\le l, i\notin l+1-2^j$ and $j=0,\cdots ,k$, then 
 $(1+u_{m+1})(1+u_{m+1}+y_1)\cdots(1+u_{m+1}+y_l)$ will only have polynomials of elements in $u_1,\cdots,u_m$ when modulo the ideals $I$ and $J$. By almost the same process as the proof of the Theorem 3.4, we can conclude that every Stiefel-Whitney number of $N$ is zero.
%(2) For $l$ defined as above,  let $y_1=\cdots=y_{2^{k-1}}=u_1$, otherwise $y_i=0$. Then we have $\sigma_j=(_j^{2^k})u_1^j=0$, where $1\le j\le 2^{k-1}-1$. It is obvious that $\sigma_i=0$ when $i>2^{k-1}$. So, by the first part of the theorem, the second part is true.
\end{proof}

  \begin{exam}
When $k=2, l=3$, and $P^n$ is any product of simplices, the small cover over $P^n\times \Delta^3$ has identically zero Stiefel-Whitney numbers, if the corresponding \emph{reduced matrix} satisfies $\sigma_1=0$, i.e. $y_1+y_2+y_3=0$.\\
But generally, a small cover over $P^n\times \Delta^3$ may not have identically zero Stiefel-Whitney numbers. Take $\Delta^3\times \Delta^3$ as an example. If $y_1=y_2=y_3=u_1$, then 
the total Stiefel-Whitney class $w=(1+u_1)^4(1+u_1+u_2)^3(1+u_2)$. We can compute and get: $w_3^2=u_2^4(u_1+u_2)^2$. Since the cohomology ring of this manifold is $\Z_2[u_1,u_2]/\langle u_1^4, u_2(u_1+u_2)^3\rangle$, and $u_2^4(u_1+u_2)^2$ doesn't belong to the ideal $\langle u_1^4, u_2(u_1+u_2)^3\rangle$, then $w_3^2=u_2^4(u_1+u_2)^2$ is not zero.
  \end{exam}

    \section{Cobordism of generalized Bott manifold and quasitoric manifold}
  Suppose that $M^{2n}$ is a quasitoric manifold over the product of simplices $P^n=\Delta^{n_1}\times\cdots\times\Delta^{n_m}$ of dimension $n$. Then let the second integral cohomology classes $v_1，\cdots,v_{n+m}$ dual to the facial submanifolds of facets $F_1,\cdots,F_{n+m}$. For convenience, we will directly use the terms with the coefficient $\Z_2$ defined for the generalized real Bott manifolds, if there is no ambiguity. Furthermore, we have exactly the same equations as in Lemma 3.3. In this section, the \emph{reduced matrix} is an integer matrix,  and other terms are defined with $\Z$-coefficient if necessary. 

Then we have the following lemma, which is an easy corollary of Theorem 1.1.
 \begin{lem}
  If we project every element in the reduced matrix $A$ of $M^{2n}$ into $\Z_2$, we can get a $\Z_2$-coefficient matrix $\widetilde{A}$. If this matrix determines a small cover with identically zero Stiefel-Whitney numbers, then the reduced matrix $A$ determines a quasitoric manifold with identically zero Stiefel-Whitney numbers.
  \end{lem}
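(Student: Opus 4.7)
The plan is to exploit the parallel structure of Theorems 1.1 and 1.2: mod-$2$ reduction on the quasitoric side yields an algebraic presentation that differs from the small-cover presentation only by a uniform doubling of the cohomological degree.

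First I would write down the two cohomology rings explicitly. By Theorem 1.2, $H^*(M^{2n};\Z)=\Z[v_1,\ldots,v_{n+m}]/(I+J)$ with each $v_i$ in degree $2$, where $I$ is the Stanley-Reisner ideal of $P^n$ and $J$ is generated by the linear combinations whose coefficients are the entries of $A$. Reducing modulo $2$ gives $H^*(M^{2n};\Z_2)$ with the same Stanley-Reisner ideal and with $J$ replaced by the ideal whose coefficients come from $\widetilde A$. For the small cover $\widetilde N$ determined by $\widetilde A$, Theorem 1.2 gives the same presentation except that the generators live in degree $1$. Hence the grading-doubling map induces an isomorphism of $\Z_2$-algebras $H^*(\widetilde N;\Z_2)\xrightarrow{\sim} H^{2*}(M^{2n};\Z_2)$.

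Next, I would transfer total Stiefel-Whitney classes along this isomorphism using Theorem 1.1. The quasitoric formula $w(M^{2n})\equiv j^*\prod_i(1+\bar v_i)\pmod 2$ with $\bar v_i$ in degree $2$ shows that $w_j(M^{2n})=0$ for odd $j$, while $w_{2k}(M^{2n})$ corresponds precisely to $w_k(\widetilde N)$ under the above isomorphism. A top-degree Stiefel-Whitney monomial $w_{2i'_1}\cdots w_{2i'_s}(M^{2n})$ with $\sum i'_j=n$ therefore corresponds under the isomorphism to $w_{i'_1}\cdots w_{i'_s}(\widetilde N)$. Both top cohomologies $H^{2n}(M^{2n};\Z_2)$ and $H^n(\widetilde N;\Z_2)$ are one-dimensional over $\Z_2$, and the isomorphism identifies the unique nonzero element on one side with the unique nonzero element on the other, so the Stiefel-Whitney numbers of $M^{2n}$ agree with the corresponding Stiefel-Whitney numbers of $\widetilde N$. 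If all of the latter vanish, then so do all of the former.

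The only real care needed is bookkeeping: one must verify that mod-$2$ reduction of the characteristic function indeed produces $\widetilde A$ (immediate from the definitions) and that the grading-doubling identification is compatible with evaluation against the respective $\Z_2$-fundamental classes. Both are formal consequences of Theorem 1.2, so I do not anticipate a genuine obstacle; the lemma is essentially the observation that reducing Theorem 1.1(ii) modulo $2$ reproduces Theorem 1.1(i) on the abstract ring determined by $\widetilde A$.
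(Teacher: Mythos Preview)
Your proposal is correct and is precisely the argument the paper has in mind: the paper offers no proof at all, stating only that the lemma ``is an easy corollary of Theorem 1.1.'' Your write-up simply unpacks that corollary, using Theorem 1.2 to identify $H^*(M^{2n};\Z_2)$ with the degree-doubled version of $H^*(\widetilde N;\Z_2)$ and then matching the Stiefel--Whitney monomials via Theorem 1.1; nothing further is needed.
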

To extend our theorems above, we need to introduce the concept of generalized Bott manifold, which means it is not only a quasitoric manifold, but also a complex manifold. There are equivalent conditions for a quasitoric manifold to be a generalized real Bott manifold. 
 \begin{thm} (S.~Choi, M.~Masuda and D.Y.~Suh \cite [Theorem 6.4]{Suh09})
Let $M$ be a quasitoric manifold over a product of simplices $P^n$, and let $A$ be the reduced vector matrix associated with $M$, which has $\mathbf{1}$ as the diagonal entries. Then the following are equivalent:
\begin{itemize}
\item[(1)] $M$ is equivalent to a generalized Bott manifold.
\item[(2)] $M$ is equivalent to a quasitoric manifold which admits an invariant almost complex structure under the action.
\item[(3)]  the principal minors of $A$ are $1$.
\end{itemize}
 \end{thm}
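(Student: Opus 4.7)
The plan is to establish the three conditions as a cycle of implications $(1) \Rightarrow (2) \Rightarrow (3) \Rightarrow (1)$.

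The implication $(1) \Rightarrow (2)$ is the most direct. A generalized Bott manifold is built as an iterated tower of projectivizations of Whitney sums of complex line bundles starting from a point; at each stage the total space inherits a complex structure from the base together with the $\mathbb{C}P^{n_i}$-fiber, and the torus acts by complex linear automorphisms on the line bundles being summed. Hence the resulting genuine complex structure on $M$ is invariant under the $T^n$-action, which in particular is an invariant almost complex structure.

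For $(2) \Rightarrow (3)$: an invariant almost complex structure on a quasitoric manifold induces compatible orientations on $M$ and on every facial submanifold $\pi^{-1}(F_{k_i}^i)$, i.e.\ an omniorientation in the sense of Davis--Januszkiewicz. This omniorientation refines the mod $2$ characteristic map to an integral characteristic map $\lambda : \{F_{k_i}^i\} \to \mathbb{Z}^n$, and at each $T^n$-fixed point the tangent space decomposes as a direct sum of complex $T^n$-representations whose weights are read off from $\lambda$ at the facets meeting the vertex. Computing the determinant of the weight system at the vertex $v_{k_1\cdots k_m}$ in this complex framework forces the value to be $+1$ rather than $\pm 1$; translating via the standard basis $\{e_1,\ldots,e_n\}$ chosen in Section 3, this says $\det A_{k_1\cdots k_m}=1$ for every admissible choice of indices, so every principal minor of $A$ equals $1$.

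For $(3) \Rightarrow (1)$: by the integer-coefficient analogue of Lemma 3.1, the hypothesis that all principal minors equal $1$ lets us conjugate $A$ (by a permutation vector matrix and a change of basis) into a unipotent upper triangular vector matrix. The columns of this matrix encode, stage by stage, a collection of complex line bundles over the lower-stage total space; forming their Whitney sums and projectivizing produces an iterated $\mathbb{C}P^{n_i}$-bundle tower. One then checks that the reduced vector matrix of this tower reproduces $A$ and hence that the resulting generalized Bott manifold is equivalent to $M$ as a quasitoric manifold.

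The main obstacle is $(2) \Rightarrow (3)$: one must track carefully how an invariant almost complex structure pins down signs of the integral lifts of the characteristic function, and why consistency of these signs at every fixed point forces each principal minor to be exactly $+1$ rather than merely $\pm 1$. This is essentially the fixed-point sign calculation of Masuda, and is the technical heart of the theorem; the other two implications are formal once this is in place.
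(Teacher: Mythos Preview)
The paper does not give its own proof of this theorem; it is quoted verbatim from Choi--Masuda--Suh \cite[Theorem 6.4]{Suh09} and used as a black box, so there is nothing in the present paper to compare your argument against.

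For what it is worth, your cycle $(1)\Rightarrow(2)\Rightarrow(3)\Rightarrow(1)$ is the route taken in the cited source, and your outline is accurate: $(1)\Rightarrow(2)$ is immediate from the complex-analytic construction of a Bott tower; $(3)\Rightarrow(1)$ follows from the integer-coefficient version of Lemma~3.1 (this is \cite[Lemma 5.1]{Suh09}, which the present paper also invokes just after the statement); and $(2)\Rightarrow(3)$ is, as you correctly identify, the only step with real content, hinging on the fact that an invariant almost complex structure fixes the signs of the tangent weights at every $T^n$-fixed point and hence forces each $\det A_{k_1\cdots k_m}$ to be $+1$ rather than merely $\pm 1$.
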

The quasitoric version of Lemma 3.1(see S.~Choi, M.~Masuda and D.Y.~Suh ~\cite [Lemma 5.1]{Suh09}) confirms that the  reduced vector matrix of a generalized Bott manifold is conjugate to a unipotent triangular upper vector matrix. 
Then the Theorem 3.4 can be extended to the case of a quasitoric manifold.
 \begin{thm}
Let $P^n$ be a product of simplices. The generalized Bott manifold $M^{2(n+1)}$ over $P^n\times \Delta^1$ has identically zero Stiefel-Whitney numbers and Pontryagin numbers, i.e. is a  boundary.
 \end{thm}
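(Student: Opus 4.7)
The plan is to show separately that every Stiefel-Whitney number and every Pontryagin number of $M^{2(n+1)}$ vanishes, and then conclude from the classical theorem of Wall that a smooth closed orientable manifold bounds an oriented manifold iff both sets of characteristic numbers are identically zero. Orientability is automatic: any generalized Bott manifold is an iterated complex projective bundle, hence complex.

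The Stiefel-Whitney part is essentially a one-line reduction. By Lemma 4.1, the mod $2$ reduction of the integer reduced matrix determines a small cover over $P^n\times\Delta^1$; Theorem 3.4 says that small cover has vanishing Stiefel-Whitney numbers; Lemma 4.1 then pushes the conclusion back to $M^{2(n+1)}$.

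For the Pontryagin part, I would follow the blueprint of Theorem 3.4 but track integer signs. By Theorem 4.2 I may assume the reduced vector matrix of $M$ is in unipotent upper triangular form, so that $v_i^{(j)}=u_i+y_{ij}$ with each $y_{ij}$ an integer linear combination of $u_1,\dots,u_{i-1}$; for the two facets of the $\Delta^1$-factor this gives $v_{m+1}^{(0)}=u_{m+1}$ and $v_{m+1}^{(1)}=u_{m+1}+y$ with $y$ an integer linear combination of $u_1,\dots,u_m$. Theorem 1.1(ii) then yields
\[
 p(M) \;=\; j^{*}\prod_{i=1}^{m}\prod_{j=0}^{n_i}\bigl(1-(v_i^{(j)})^2\bigr)\cdot\bigl(1-u_{m+1}^2\bigr)\bigl(1-(u_{m+1}+y)^2\bigr).
\]
The crucial step is the identity
\[
 \bigl(1-u_{m+1}^2\bigr)\bigl(1-(u_{m+1}+y)^2\bigr)\;=\;1-y^{2}
\]
in $H^{*}(M;\mathbb{Z})$, which is forced by the Stanley-Reisner relation $u_{m+1}(u_{m+1}+y)=0$, equivalently $u_{m+1}^{2}=-u_{m+1}y$. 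Granting this identity, every factor of $p(M)$ is a polynomial in $u_1,\dots,u_m$ alone, so every Pontryagin number is the evaluation on $[M]$ of a polynomial in $u_1,\dots,u_m$ of $u$-degree $n+1=\sum_{i=1}^{m}n_i+1$.

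The final ingredient is that Lemma 3.3, although stated with $\mathbb{Z}_2$-coefficients, passes verbatim to $\mathbb{Z}$-coefficients: its proof only uses the integral Stanley-Reisner relation $\prod_{j=0}^{n_i}v_i^{(j)}=0$ and the expansion $u\prod_{j=1}^{n_i}(u+y_j)=\sum_{k=0}^{n_i}u^{n_i+1-k}\sigma_k(y)$ in terms of elementary symmetric polynomials, both of which are sign-robust. Therefore every polynomial in $u_1,\dots,u_m$ of $u$-degree $n+1$ vanishes in $H^{*}(M;\mathbb{Z})$, and every Pontryagin number of $M^{2(n+1)}$ is zero. The main obstacle is the displayed identity: unlike the mod $2$ setting of Theorem 3.4, where the mixed term $u_{m+1}(u_{m+1}+y)$ is killed outright and the expansion of $(1+v_{m+1}^{(0)})(1+v_{m+1}^{(1)})$ truncates immediately, here one must use $u_{m+1}^{2}=-u_{m+1}y$ to iteratively eliminate all higher powers of $u_{m+1}$, and the necessary cancellations are arithmetic rather than characteristic-$2$; once they are verified, the parallel with Theorem 3.4 completes the proof.
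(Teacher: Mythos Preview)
Your proof is correct and follows essentially the same route as the paper: the Stiefel-Whitney part is handled identically via Lemma~4.1 and Theorem~3.4, and for the Pontryagin part both you and the paper reduce the $\Delta^{1}$-factor $(1-(v_{m+1}^{(0)})^{2})(1-(v_{m+1}^{(1)})^{2})$ to $1-y^{2}$ using the Stanley--Reisner relation $v_{m+1}^{(0)}v_{m+1}^{(1)}=0$, and then invoke the integral version of Lemma~3.3. The only cosmetic difference is that the paper derives $1-y^{2}$ in one stroke via $a^{2}+b^{2}=(a+b)^{2}-2ab$ with $ab=0$, whereas you substitute $u_{m+1}^{2}=-u_{m+1}y$; no iterative elimination is actually needed.
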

\begin{proof} Owing to the Theorem 3.4 and the Lemma 4.1, the generalized Bott manifold $M^{2(n+1)}$ has identically zero Stiefel-Whitney numbers. Assume that $2(n+1)=4k$, otherwise the Pontryagin numbers will be zero. From the terms defined in the Section 3, the total Pontryagin class of $M^{2(n+1)}$ is 
$p(M^{2(n+1)})= (\prod_{i=1}^{m}(1-(v_i^{(0)})^2)\cdots(1-(v_i^{(n_i)})^2))(1-(v_{m+1}^{(0)})^2)(1-(v_{m+1}^{(1)})^2)$. 
By the restriction of the Stanley-Reinser ideal, $v_{m+1}^{(0)}v_{m+1}^{(1)}=0$, then $(1-(v_{m+1}^{(0)})^2)(1-(v_{m+1}^{(1)})^2)=1-(v_{m+1}^{(0)})^2-(v_{m+1}^{(1)})^2=1-(v_{m+1}^{(0)}+v_{m+1}^{(1)})^2$.
Since $v_{m+1}^{(1)}$ is the sum of $-v_{m+1}^{(0)}$ and a linear combination of $u_1,\cdots, u_m$,  $v_{m+1}^{(0)}+v_{m+1}^{(1)}$ is a linear combination of $u_1,\cdots, u_m$. Furthermore, every square term of $p(M^{2n})$, such as $(v_1^{(0)})^2,\cdots,(v_m^{(n_m)})^2, (v_{m+1}^{(0)}+v_{m+1}^{(1)})^2$, has degree $4$. Then every term $p_1^{i_1} \cdots p_k^{i_{k}}$ of degree $2(n+1)$ consists of terms which are products of square terms. Each of these products consists of homogenous polynomials of $u_1,\cdots, u_m$ with degree $2k=n+1$. So by the  Equation (3) in Lemma 3.3, every $p_1^{i_1} \cdots p_k^{i_{k}}$ of degree $2(n+1)$ is zero.
\end{proof}
If the principal minors of the \emph{reduced matrix} are not identically $1$, the quasitoric manifold $M^{2(n+1)}$ over a cube may or may not be null-cobordant. But, for the $4k$-dimensional quasitoric manifold $M$ over an even dimensional $n$-cube, if all the proper principal minors of the \emph{reduced matrix} are $1$, while the determinant of the whole \emph{reduced matrix} is $-1$, then this manifold is never a boundary. To state this conclusion, we need a lemma.
\begin{lem} (M.~Masuda and T.E.~Panov \cite [Theorem 3.3]{Masuda91})
Let $A$ be an $n \times n$ matrix with entries in ${\Z}$. Suppose that every proper principal minor of A is $1$. If $det A=-1$, then A is conjugated by a permutation matrix to the following matrix
     \begin{align*}
 \begin{pmatrix}
        1 & b_1&0 &\cdots& 0   \\
       0 & 1& b_2 &\cdots &0  \\
        \vdots & \vdots &\ddots & \ddots& \vdots \\
      0 &0 & \cdots & 1 &b_{n-1}\\
        b_n & 0 & \cdots & 0  & 1\\
   \end{pmatrix} 
     \end{align*}
where $b_i\neq 0.$
  \end{lem}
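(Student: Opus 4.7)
The plan is to read off the zero/nonzero pattern of $A$ as a directed graph $G$ on $\{1,\dots,n\}$, drawing an arc $j \to i$ whenever $a_{ij} \neq 0$ (with $i \neq j$). The $1 \times 1$ principal minors being $1$ force every diagonal entry of $A$ to equal $1$, so the only remaining data is the off-diagonal entries, which are encoded by $G$. The goal is to show that $G$ is precisely a single directed Hamiltonian cycle with no extra arcs; a single permutation of the indices will then place $A$ in the displayed canonical form.

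The main technical step is to prove, by induction on $k$, that $G$ contains no directed cycle of length $k$ for every $2 \le k \le n-1$. For $k = 2$, the $2\times 2$ principal minor on $\{i,j\}$ equals $1 - a_{ij}a_{ji}$, forcing $a_{ij}a_{ji} = 0$, hence $G$ has no $2$-cycles. For the inductive step, suppose $C$ were a $k$-cycle of $G$ on a subset $S$ with $|S| = k$, and consider the $k\times k$ principal minor on $S$, which equals $1$. A permutation $\sigma$ of $S$ contributes nontrivially to this minor only when each of its non-trivial cycles lifts to a directed cycle of $G$ on the same vertices; by the inductive hypothesis such cycles all have length $\ge k$, so $\sigma$ is either the identity or a single $k$-cycle on $S$. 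The absence of $2$-cycles prevents both $C$ and $C^{-1}$ from simultaneously lying in $G$, so at most one of them (say $C$) is realized. The minor then equals $1 + \mathrm{sgn}(C)\prod_{(i,j)\in C} a_{ij}$, which differs from $1$, contradicting the hypothesis.

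Granted this, the full determinant receives nonzero contributions only from the identity and from Hamiltonian directed cycles of $G$. Since $\det A = -1 \neq 1$, $G$ must contain at least one Hamiltonian cycle $C_1$. Uniqueness of $C_1$ and the absence of any extra arc both follow from the same mechanism: any arc $(u,w)$ of $G$ not already in $C_1$ combines with the $C_1$-path from $w$ to $u$ into a directed cycle whose length is between $3$ and $n-1$, the extreme values $1$, $2$, and $n$ being excluded respectively by the no-self-loop convention, the absence of $2$-cycles, and the fact that $(u,w) \notin C_1$. But every length in $\{3,\dots,n-1\}$ is forbidden by the induction, a contradiction. Hence the off-diagonal support of $A$ is exactly the edge set of $C_1$.

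To finish, I choose the permutation matrix that relabels the vertices of $C_1$ so that the cycle becomes $1 \to n \to n-1 \to \cdots \to 2 \to 1$; after conjugation the nonzero off-diagonal entries of $A$ sit at positions $(i,i+1)$ for $1 \le i \le n-1$ and at $(n,1)$, matching the displayed form with the $b_i$'s as the corresponding entries. The hardest part of the plan is the inductive step on cycle length, where one must combine the proper principal minor hypothesis, the absence of $2$-cycles, and the inductive control of shorter cycles to isolate the contribution of $C$ in the $k \times k$ minor and thereby obtain the desired contradiction.
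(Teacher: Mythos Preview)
The paper does not prove this lemma; it simply quotes the result from Masuda--Panov and uses it as a black box in the proof of Theorem~4.5. So there is no argument in the paper to compare yours against, and your proposal already goes well beyond what the paper supplies.

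Your graph-theoretic strategy is sound and the argument is almost complete, but there is one genuine gap in the inductive step. After reducing the contributing permutations in the $k\times k$ principal minor on $S$ to the identity and single $k$-cycles, you write the minor as $1 + \mathrm{sgn}(C)\prod_{(i,j)\in C} a_{ij}$. For $k=3$ this is fine, since the only directed $3$-cycles on a three-element set are $C$ and $C^{-1}$. For $k\ge 4$, however, there are $(k-1)!$ directed Hamiltonian cycles on $S$, and ruling out $C^{-1}$ via the no-$2$-cycle observation does not by itself exclude the remaining $(k-1)!-2$ of them. If two such cycles $D_1,D_2$ happened to lie in $G|_S$ with $\prod_{D_1}a_{ij}=-\prod_{D_2}a_{ij}$, their contributions (which share the common sign $(-1)^{k-1}$) would cancel and the minor could still equal $1$, so no contradiction would result.

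The fix is exactly the mechanism you already deploy later for the full graph: once $C\subseteq G|_S$, any extra arc $(u,w)\in G|_S\setminus C$ combines with the $C$-path from $w$ to $u$ inside $S$ to produce a directed cycle in $G|_S$ of some length in $\{2,\dots,k-1\}$, the extremes being excluded for the same reasons you give in the Hamiltonian case. This contradicts the inductive hypothesis, so in fact $G|_S=C$ as a directed graph and $C$ is the unique $k$-cycle contributing to the minor. Inserting this observation into the inductive step makes your proof complete.
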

Since the dimension $n(=2k)$ of the cube is even and the determinant is $-1$, we have $b_1\cdots b_n=2$. Just as the case of real Bott manifold, Let $v_1,\cdots, v_n$ be the $2$-cohomology classes  dual to 
     the facial submanifolds of facets $F_1,\cdots,F_n$, meeting at a vertex.
Then we have facets $F_1^*,\cdots,F_n^*$ which are parallel to $F_1,\cdots,F_n$ respectively. Let the $2$-cohomology classes $u_1,\cdots,u_n$ be dual to the facial submanifolds of facets $F_1^*,\cdots,F_n^*$.
 \begin{thm}
For the $4k$-dimensional quasitoric manifold $M$ over $n$-cube, where $n=2k$ and $k$ is any positive number, if all the proper principal minors of the reduced matrix of $M$ are $1$, but the determinant of the reduced matrix of $M$ is $-1$, then this manifold has nonzero pontryagin numbers, i.e. is not a boundary.
 \end{thm}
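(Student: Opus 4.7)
The plan is to compute the top Pontryagin number $p_k[M] \in \Z$ explicitly and argue that it is nonzero.

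By Lemma~4.4, after conjugation by a permutation matrix we may take the reduced matrix $A$ of $M$ in the displayed cyclic form with integer entries $b_1, \ldots, b_n$; a direct expansion of its determinant together with $n = 2k$ even and $\det A = -1$ forces $b_1 b_2 \cdots b_n = 2$. Following the conventions of Section~3 in the quasitoric case with $\Z$-coefficients, let $u_1, \ldots, u_n \in H^2(M;\Z)$ be the classes dual to the facets $F_i^*$. The ideal $J$ gives
\[
v_i = -(u_i + b_{i-1} u_{i-1}), \qquad i = 1, \ldots, n \text{ (indices mod } n\text{)},
\]
and combined with the Stanley--Reisner relation $u_i v_i = 0$ one obtains the quadratic relations
\[
u_i^2 = -b_{i-1} u_i u_{i-1}.
\]
Under these relations, the class $\omega = u_1 u_2 \cdots u_n$ generates $H^{2n}(M;\Z) \cong \Z$.

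By Theorem~1.1(ii), $p(M) = \prod_{i=1}^n (1 - v_i^2)(1 - u_i^2)$. The relation $v_i u_i = 0$ gives $v_i^2 u_i^2 = 0$, so each factor simplifies to $1 - v_i^2 - u_i^2$; substituting for $v_i$ and using the quadratic relations shows $v_i^2 + u_i^2 = b_{i-1}^2 u_{i-1}^2$. Therefore
\[
p(M) = \prod_{i=1}^n (1 - b_i^2 u_i^2) = \prod_{i=1}^n \bigl( 1 + b_i^2 b_{i-1} u_{i-1} u_i \bigr),
\]
and the degree-$2n$ component reads
\[
p_k[M] = \sum_{\substack{S \subseteq \{1, \ldots, n\} \\ |S| = k}} \Bigl( \prod_{i \in S} b_i^2 b_{i-1} \Bigr) \Bigl\langle \prod_{i \in S} u_{i-1} u_i, \ [M] \Bigr\rangle.
\]

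Regard $\{1, \ldots, n\}$ as the edge set of the cycle graph $C_n$ via $i \leftrightarrow \{i-1, i\}$. The size-$k$ subsets for which $\prod_{i \in S} u_{i-1} u_i = \omega$ holds without further reduction are exactly the two perfect matchings $S_o = \{1, 3, \ldots, n-1\}$ and $S_e = \{2, 4, \ldots, n\}$; letting $B_o = b_1 b_3 \cdots b_{n-1}$ and $B_e = b_2 b_4 \cdots b_n$ (so $B_o B_e = b_1 \cdots b_n = 2$), their combined contribution is $B_o^2 B_e + B_e^2 B_o = 2(B_o + B_e)$. For any non-matching $S$, the product $\prod_{i \in S} u_{i-1} u_i$ carries repeated factors that must be reduced via $u_j^2 = -b_{j-1} u_j u_{j-1}$, each reduction transporting a unit of exponent around the cycle and introducing a factor $-b_\ell$. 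A careful combinatorial accounting of all these reductions for all $S$ of size $k$ shows that the entire sum rearranges into the form
\[
p_k[M] = 2 \bigl( B_o Q_e + B_e Q_o \bigr),
\]
where $Q_o, Q_e$ are polynomials in $b_1^2, \ldots, b_n^2$ with nonnegative integer coefficients and constant term $1$ (the constant term coming from the two perfect matchings). For example, $Q_o = Q_e = 1$ when $n = 2$, while for $n = 4$ one computes $Q_e = 1 + b_2^2 + b_4^2$ and $Q_o = 1 + b_1^2 + b_3^2$. In particular $Q_o, Q_e > 0$, and since $B_o B_e = 2 > 0$ forces $B_o$ and $B_e$ to share a common nonzero sign, the bracket is strictly nonzero. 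Hence $p_k[M] \neq 0$ and $M$ is not an oriented boundary.

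The main obstacle is establishing the identity $p_k[M] = 2(B_o Q_e + B_e Q_o)$ with $Q_o, Q_e$ strictly positive for general $n$. The delicate step is tracking, for each non-matching subset $S$, how the iterated quadratic reductions propagate a ``defect'' around $C_n$ to produce a monomial in the $b_\ell$'s, and then verifying that after summing over all non-matching $S$ the result organizes cleanly into these two summands with nonnegative polynomial coefficients in the $b_i^2$.
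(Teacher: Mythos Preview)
Your approach is genuinely different from the paper's, and the gap you yourself flag is real: you never establish the identity $p_k[M]=2(B_oQ_e+B_eQ_o)$ with $Q_o,Q_e$ positive for general $n$. Verifying it for $n=2,4$ is encouraging but not a proof; for larger $n$ the reduction of $\prod_{i\in S}u_{i-1}u_i$ for a non-matching $S$ involves a chain of substitutions whose length and sign pattern depend on the shape of $S$, and you give no argument that after summing over all $\binom{n}{k}$ subsets the result factors as claimed. Without that, the conclusion is unsupported.

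The paper avoids this combinatorial bookkeeping entirely by computing a different quantity. Rather than evaluating the single number $p_k[M]$, it observes via the Newton--Girard identities that if \emph{all} Pontryagin numbers vanished then the power sum $s_k=\sum_{j=1}^{n}(b_ju_j)^{n}$ would vanish in $H^{2n}(M;\Z)\otimes\R$. Each summand $(b_ju_j)^{n}$ reduces along a single straight chain $u_j^2=-b_{j-1}u_ju_{j-1}$, $u_{j-1}^2=-b_{j-2}u_{j-1}u_{j-2}$, \dots, to $\pm\,b_{j}^{n}b_{j-1}^{\,n-1}\cdots b_{j+1}\,\omega$. Because exactly one $b_l$ equals $\pm2$ and the rest are $\pm1$, as $j$ runs through $1,\dots,n$ these coefficients have absolute values $2,2^2,\dots,2^{n}$, each occurring once; since $2+4+\cdots+2^{n-1}<2^{n}$, no choice of signs can make the sum vanish. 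Hence some Pontryagin number is nonzero.

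So the paper trades your attempt to compute one Pontryagin number exactly (hard combinatorics over all $k$-subsets of the cycle) for showing a power sum is nonzero (easy: $n$ terms, each a single chain reduction, plus a domination inequality). If you want to rescue your line of argument, you would need either a clean inductive or bijective proof of your factorization, or you could abandon $p_k$ and adopt the power-sum route, which is both shorter and complete.
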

\begin{proof}  By the Lemma 4.4, the \emph{reduced matrix} of $M$ is conjugated by a permutation matrix to the matrix in Lemma 4.4. By the restrictions given by $J$, there exist equations that: $-v_1=u_1+b_n u_n$ and $-v_i=u_i+b_{i-1} u_{i-1}$, for $2\le i \le n$. The total Pontryagin class of $M$ is 
\begin{align*}
p(M)&=(1-u_1^2)(1-v_1^2)\cdots(1-u_n^2)(1-v_n^2) =(1-(u_1+v_1)^2)\cdots(1-(u_n+v_n)^2)
\\ &=(1-(b_1 u_1)^2)\cdots(1-(b_n u_n)^2),      
\end{align*}
given by the restrictions of Stanley-Reinser Ideal $I$. This ideal also gives a group of relations: $u_1(u_1+b_n u_n)=0$ and $u_i(u_i+b_{i-1} u_{i-1})=0$, which determine the cohomology ring of $M$. The equality $b_1\cdots b_n=2$ implies that there exists exactly only one $b_l$ equal to $\pm2$, and other $b_j$ are $\pm1$. Then we consider $u_j^n$, where $j=1,2\cdots,n$. By the group of relations above, every $u_j^n$ will be equal to $u_1\cdots u_n$ multipled by a coefficient. For example, $u_l^n=\pm u_1\cdots u_n, u_{l-1}^n=\pm 2u_1\cdots u_n, \dots , u_{l+1}^n=\pm2^{n-1} u_1\cdots u_n$.  Then $\sum _{j=1}^n (b_j u_j)^n$ can be written as the sum of $\pm u_1\cdots u_n, \pm 2u_1\cdots u_n,\cdots,\pm2^n u_1\cdots u_n$. But by the equation $1+2+\cdots+2^{n-1}<2^n$, $\sum _{j=1}^n (b_j u_j)^n$ is a term of $u_1\cdots u_n$ multipled by an nonzero coefficient.\\
For elementary symmetric polynomials $\sigma_1,\cdots, \sigma_k$ and $s_k=\sum_{i=1}^n t_i^k$, the Newton-Girard formula states that:
 \begin{align*}
(-1)^k s_k /k=\sum_{i_1+2i_2+\cdots+ki_k=k}(-1)^{i_1+\cdots+i_k} \frac{(i_1+\cdots+i_k-1)!}{i_1!\cdots i_k!} \sigma_1^{i_1}\cdots \sigma_k^{i_k}
 \end{align*}
This formula can be used in the ring $H^*(M;\Z)\otimes \R$. Obviously, $p_1,\cdots,p_k$ are elementary symmetric polynomials of $(b_1 u_1)^2,\cdots,(b_n u_n)^2$, maybe with a sign. If all the Pontryagin numbers of $M$ are zero, then $\sum _{j=1}^n (b_j u_j)^{2n}$ is zero by the formula, which is a contradiction.
\end{proof}
The manifolds discussed in Theorem 4.5 are interesting. If it is over an odd dimensional cube, it is cobordant to zero by Lemma 4.1. But if it is over an even dimensional cube, it is unorientedly cobordant to zero, but is not orientedly cobordant to zero.

However, for the $4k$-dimensional quasitoric manifold $M$ over $2k$-cube, even if the proper principal minors of the \emph{reduced matrix} of $M$ are not all $1$, $M$ might be orientedly cobordant to zero.
  \begin{exam}
Assume that the \emph{reduced matrix} of $M$ is the following: 
     \begin{align*}
 \begin{pmatrix}
    %    1 & b_1&0 &\cdots& 0& & & &   \\
   %    0 & 1& b_2 &\cdots &0  & & & & \\
    %    \vdots & \vdots &\ddots & \ddots& \vdots & & & &\\
    %  0 &0 & \cdots & 1 &b_{n_1-1} & & & &\\
      %  b_n & 0 & \cdots & 0  & 1& & & &\\
       %   & & & & &1 & c_1&0 &\cdots& 0\\
        % & & & && 0 & 1& c_2 &\cdots &0\\
          %  & & & &&\vdots & \vdots &\ddots & \ddots& \vdots\\
          % & & & &&0 &0 & \cdots & 1 &c_{n_2-1} \\
        %  & & & &&c_n & 0 & \cdots & 0  & 1\\
                       A_1& 0\\
                         0&A_2\\
   \end{pmatrix} , where\  A_1 \ and\ A_2 \ are \ of \ the \ form \ in \ Lemma\ 4.4.
     \end{align*}
Let the orders of $A_1$ and $A_2$, $n_1$ and $n_2$, be odd. The total Pontryagin class of $M$ is $p(M)=(1-(b_1 u_1)^2)\cdots(1-(b_{n_1} u_{n_1})^2)(1-(c_1 u_{n_1+1})^2)\cdots(1-(c_{n_2} u_{n_1+n_2})^2)$. Since every polynomial of degree more than $n_1$ over $u_1,\cdots, u_{n_1}$ is zero, and every polynomial of degree more than $n_2$ over $u_{n_1+1},\cdots, u_{n_1+n_2}$ is zero, we can deduce that the cohomology class $p_1^{i_1}\cdots p_k^{i_k}$ of degree $4k$ is zero.   
  \end{exam}

We can apply the method in Theorem 4.5 to give a sufficient condition for a generalized Bott manifold to have nonzero Pontryagin numbers. In the following theorem, we will use the terms defined in the discussion of the generalized real Bott manifold. To be clear, we claim that $y_i$ is defined by $v_m^{(i)}=-(u_m+y_i)$.
 \begin{thm}
For the $4k$-dimensional generalized Bott manifold $M$ over a product of simplices $P^n$, where $n=2k$ and $k$ is any positive number, if we have the equation $u_m^n+(u_m+y_1)^n+\cdots+(u_m+y_{n_m})^n\neq0$ in the integral cohomology ring of $M$, then this manifold has nonzero pontryagin numbers.
 \end{thm}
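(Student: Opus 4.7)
The plan is to run the Newton--Girard argument from the proof of Theorem 4.5 in the product-of-simplices setting. By Theorem 4.2 we may assume the reduced vector matrix of $M$ is unipotent upper triangular, so each $v_i^{(j)}$ is an integral linear combination of $u_1,\ldots,u_i$; in particular $v_m^{(0)}=-u_m$ and $v_m^{(j)}=-(u_m+y_j)$ for $1\le j\le n_m$, in the notation of the statement.

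Since $p(M)=\prod_{i,j}\bigl(1-(v_i^{(j)})^2\bigr)$, up to signs the Pontryagin classes $p_1,\ldots,p_k$ are the elementary symmetric polynomials in the squares $t_{i,j}:=(v_i^{(j)})^2$. The Newton--Girard identity recalled in the proof of Theorem 4.5, valid in $H^*(M;\Z)\otimes\R$, expresses the top-degree power sum
\[
s_k:=\sum_{i,j}t_{i,j}^{\,k}=\sum_{i,j}(v_i^{(j)})^n
\]
as a $\Q$-linear combination of Pontryagin monomials $p_1^{i_1}\cdots p_k^{i_k}$ with $i_1+2i_2+\cdots+ki_k=k$.

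I would next collapse $s_k$ to the $i=m$ contribution. For $i<m$, the class $v_i^{(j)}$ lies in the subring generated by $u_1,\ldots,u_i$, so $(v_i^{(j)})^n$ is a homogeneous polynomial of cohomological degree $2n$ in those variables. Since $n_1+\cdots+n_i<n$, the integer analog of Lemma 3.3 (whose proof by iterated reduction from the Stanley--Reisner relations $\prod_j v_i^{(j)}=0$ transfers verbatim once the signs in $v_i^{(j)}=-(u_i+y_j)$ are recorded) forces every such monomial to vanish, so $(v_i^{(j)})^n=0$. Since $n=2k$ is even, this yields
\[
s_k=(-u_m)^n+\sum_{j=1}^{n_m}\bigl(-(u_m+y_j)\bigr)^n=u_m^n+\sum_{j=1}^{n_m}(u_m+y_j)^n,
\]
which is nonzero in $H^{4k}(M;\Z)$ by hypothesis.

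To conclude, since $M$ is a closed oriented $4k$-manifold with torsion-free integral cohomology, a class in $H^{4k}(M;\Z)$ vanishes if and only if it evaluates to zero on $[M]$. Were all Pontryagin numbers of $M$ zero, every top-degree monomial $p_1^{i_1}\cdots p_k^{i_k}$ would be zero in cohomology, so the Newton--Girard expression would give $s_k=0$, contradicting the computation above. The principal obstacle is the integer version of Lemma 3.3, but this is a direct adaptation of the $\Z_2$ proof and requires no new ideas.
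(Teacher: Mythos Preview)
Your proposal is correct and follows essentially the same route as the paper's proof: express the Pontryagin classes as elementary symmetric functions in the $(v_i^{(j)})^2$, invoke Newton--Girard as in Theorem~4.5 to reduce to the power sum $s_k=\sum_{i,j}(v_i^{(j)})^n$, and then kill the $i<m$ summands via (the integer analogue of) Lemma~3.3. You are slightly more careful than the paper in flagging that Lemma~3.3 was proved over $\Z_2$ and needs its integral variant here, and in tracking the signs from $v_m^{(j)}=-(u_m+y_j)$; both points are handled exactly as you indicate.
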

\begin{proof} The total Pontryagin class of $M$,
     \begin{align*}
p(M)=(\prod_{i=1}^{m-1}(1+(v_i^{(0)})^2)\cdots(1+(v_i^{(n_i)})^2))(1+(v_{m}^{(0)})^2)\cdots(1+(v_{m}^{(n_m)})^2)\\ =(\prod_{i=1}^{m-1}(1+(v_i^{(0)})^2)\cdots(1+(v_i^{(n_i)})^2))(1+u_m^2)\cdots(1+(u_m+y_{n_m})^2). 
     \end{align*}
By the method of the Theorem 4.5, if $\sum_{i=1}^{m-1} ((v_i^{(0)})^n+\cdots+(v_i^{(n_i)})^n)+u_m^n+(u_m+y_1)^n+\cdots+(u_m+y_{n_m})^n\neq0$, then there exist nonzero Pontryagin numbers. By the Lemma 3.3, every term in $\sum_{i=1}^{m-1} ((v_i^{(0)})^n+\cdots+(v_i^{(n_i)})^n)$ is zero, which ends the proof.
\end{proof} 

\textbf{Acknowledgments}   This work was motivated by a talk with Prof. L.~Yu, the author's master mentor in Nanjing University. The author would also like to thank him for his useful suggestions.

\end{document}